\newcommand{\lag}{\langle}  
\newcommand{\rag}{\rangle}  
\newcommand{\bla}{\mbox{\boldmath$\lambda$}}     
\newcommand{\bx}{\mbox{\boldmath$x$}}        
\newcommand{\bxi}{\mbox{\boldmath$\xi$}}         
\newcommand{\bal}{\mbox{\boldmath$\alpha$}}   
\newcommand{\sbla}{\mbox{\scriptsize\boldmath$\lambda$}}   
\newcommand{\sbal}{\mbox{\scriptsize\boldmath$\alpha$}}   
\newcommand{\bv}[1]{\mbox{\boldmath$#1$}}   
\newcommand{\sbv}[1]{\mbox{\scriptsize\boldmath$#1$}}  
\newcommand{\SU}[1]{\bv{S}(#1)} 
\newcommand{\floor}[1]{\lfloor #1 \rfloor} 
\newcommand{\bbe}{\mbox{\boldmath$\beta$}}     
\newcommand{\sbbe}{\mbox{\scriptsize\boldmath$\beta$}}   
\newtheorem{remark}{Remark}
\newtheorem{theorem}{Theorem}
\newtheorem{lemma}{Lemma}
\newtheorem{corollary}{Corollary}
\DeclareMathOperator*{\argminA}{arg\,min}
\begin{document}

\title{The exact asymptotics of the large deviation\\ probabilities in the multivariate\\ boundary crossing problem}

\author{Yuqing~Pan$^1$ and Konstantin~Borovkov$^2$}

\date{}
\maketitle

\footnotetext[1]{School of Mathematics and Statistics, The University of Melbourne, Parkville 3010, Australia. E-mail: yuqingpan13@gmail.com.}

\footnotetext[2]{School of Mathematics and Statistics, The University of Melbourne, Parkville 3010, Australia. E-mail: borovkov@unimelb.edu.au.}

\footnotetext[3]{Accepted for publication by the Applied Probability Trust in {\em Advances in Applied Probability\/} 51.3 (September 2019), see http://www.appliedprobability.org.}
 




\begin{abstract}
For a multivariate random walk with i.i.d.\ jumps satisfying the Cram\'er moment condition and having mean vector with at least one negative component, we  derive the exact asymptotics of the probability of ever hitting the positive orthant that is being translated to infinity along a fixed vector with positive components. This problem is motivated by and extends results from a paper by F.~Avram et~al.\ (2008) on a two-dimensional risk process. Our approach combines the large deviation techniques  from a series of papers by A.~Borovkov and A.~Mogulskii from around 2000 with new auxiliary constructions, which enable  us to  extend their results on hitting remote sets with smooth boundaries to the case of boundaries with a ``corner" at the ``most probable  hitting point". We also discuss how our results can be extended to the case of more general target sets.

{\em Key words and phrases:} large deviations; exact asymptotics; multivariate random walk; multivariate ruin problem; Cram\'er moment condition; boundary crossing;second rate function.  

{\em AMS Classifications:}  {60F10;}{ 60K35;60G50} 
\end{abstract}

\section{Introduction} 


The  present  work was motivated by the following two-dimensional risk model from~\cite{pal1}. Consider two   insurance companies  that divide between them both claims and premia in
specified fixed proportions, so that their risk processes $U_1$ and $U_2$ are, respectively,
\begin{align*}
\label{orig_model}
U_i(t):= u_i + c_i t- S_i(t),\qquad i=1,2,
\end{align*}
where $u_i>0$ are the initial reserves, $c_i>0$ the premium rates, and their respective claim processes  $S_i(t)=\delta_i S (t)$ are  fixed proportions   ($\delta_i>0$ are constants,  $\delta_1+\delta_2=1$) of a common process $S(t)$ of claims made against them.  It is assumed~\cite{pal1}  for definiteness  that $c_1/\delta_1 > c_2/\delta_2$, i.e.,   the second company receives less premium per amount paid out and so can be considered as a  reinsurer.  That paper mostly dealt with the following two ruin times:
\begin{align*}
\tau_{\rm or} := \inf\{t\ge 0: U_1(t) \wedge U_2 (t) <0\},
 \quad
\tau_{\rm sim}    := \inf\{t\ge 0: U_1(t) \vee U_2 (t) <0\},
\end{align*}
at which at least one of the two or both of the companies are ruined, respectively. The key observation made in~\cite{pal1} was  that both times are actually the first crossing times of some piece-wise linear boundaries by the univariate claim process $S(t)$. Thus the problem of computing the respective ``bivariate ultimate ruin probabilities"
\[
\Psi_{\rm or} (u_1, u_2):= \mathbb{P} (\tau_{\rm or}<\infty),\quad \Psi_{\rm sim} (u_1, u_2):= \mathbb{P} (\tau_{\rm sim}<\infty)
\]
is reduced to finding univariate boundary crossing probabilities.  When $u_1/\delta_1\ge u_2/\delta_2$, that latter problem further reduces to simply computing usual univariate ruin probabilities. However, in the alternative case the situation is more interesting. For that latter case, assuming that $S(t)$ is a compound Poisson process with positive jumps satisfying the Cram\'er moment condition, Theorem~5 of~\cite{pal1} gives the exact asymptotics of $  \Psi_{\rm sim} (as, s)$ as $s\to \infty$, of which the nature depends on the (fixed) value of~$a > 0$. Namely, there exist a function $\kappa(a)>0$ and values $0\le a_1 < a_2\le \infty$ such that
\begin{equation}
\label{Palm_thm}
\Psi_{\rm sim} (as, s) = (1+o(1)) \left\{
\begin{array}{ll}
C(a) e^{-\kappa (a) s}, & a\not \in (a_1, a_2),\\
C(a) s^{-1/2 }e^{-\kappa (a) s}, & a \in (a_1, a_2),
\end{array}
\quad s \to \infty.
\right.
\end{equation}
However,  the approach from~\cite{pal1} does  not work in the case of   non-degenerate structure of the claim process $(S_1(t), S_2(t))$, and so the general problem of finding the exact ruin probability asymptotics remained open.

We extend the asymptotics of the simultaneous ruin probability derived in Theorem~5 in~\cite{pal1}   to a much more general class of  $d$-variate, $d \ge 2,$ Sparre Andersen--type ruin models, in which there are $d$ companies receiving premiums at the respective constant rates $c_i, i = 1, \ldots,d$, and  the claim events occur at the ``event times" $\tau(j)$, $j\ge 1$, in  a renewal process $N(t)$ (with i.i.d.\ inter-claim times $\tau(j)-\tau (j-1)>0$, $\tau (0):=0$). For the $j$-th claim event the amount company $i$ has to pay is the $i$-th component of a $d$-variate random vector $\bv{J}(j):=(J_1(j), \ldots, J_d(j))$ with a general (light tail) distribution, the vectors $(\tau(j)-\tau(j-1), \bv{J}(j)), j \geq 1,$ forming an i.i.d.\ sequence. Recall that in Theorem~5 from~\cite{pal1}, one had $\bv{J}(j)=(\delta_1, \delta_2)J(j)$  for an i.i.d.\ sequence of claims $J(j),$ $N(t)$ being a Poisson process independent of the $J(j)$'s.

We achieve that by reducing  the problem to finding the asymptotic behavior of the hitting probability of a remote set by an embedded random walk~(RW). The latter problem was solved in~\cite{mainpaper}, but only in the case when the boundary of that set is smooth at the ``most probable (hitting) point" of that set by the RW. In that case, the asymptotics of the hitting probability are of the form represented by the first line on the right-hand side of~\eqref{Palm_thm}.

The main  contribution of the present work  is an extension of the multivariate large deviation techniques from~\cite{mainpaper} to the cases where the boundary of the remote set is not smooth at the ``global  most probable point" (GMPP; for the formal definition thereof, see the text after~\eqref{mpt} below), but, rather, that point is located at the ``apex of the corner" on the boundary. It is in such cases that the hitting probability asymptotics (in the bivariate case) are of the form shown in the second line on the right-hand side of~\eqref{Palm_thm}. The condition $a\in (a_1, a_2)$ in~\eqref{Palm_thm} is equivalent to the GMPP being at the ``corner" of the remote quadrant (of which the hitting will mean simultaneous ruin in the bivariate case), while $a\not\in [a_1, a_2]$ corresponds to the GMPP being on one of the sides of the quadrant, which is  the ``smooth boundary case" dealt with in~\cite{mainpaper} (the boundary cases $a=a_i,$ $i=1,2,$ correspond  to the situations discussed in Remark~\ref{rem3} below). In Remark~\ref{R1} we explain the ``genesis" of the power factor in the asymptotics in the case when the GMPP is at the ``corner point" of the target set. It turns out that in the case $d>2$ there is a whole spectrum of different power factors that can appear in front of the exponential factor for the asymptotic representation of the hitting probability, depending on the dimensionality of the ``target set" boundary component to which the GMPP belongs,  see Remark~\ref{re2.2} below.

To explain in more detail,  let
\[
Q^+ := \{
\bv{x}=(x_1, \ldots, x_d) \in \mathbb{R}^d: x_j > 0,\ 1 \le j \le d \}.
\]
Next note that, in the above-mentioned $d$-variate Sparre Andersen--type model,   the simultaneous ruin event is equivalent to the bivariate RW 
 \begin{equation}\label{sn}
\bv{S}(n) := \sum_{j = 1}^{n}\bxi(j), \quad n = 0,1,2,\ldots,
 \end{equation}
with i.i.d.\ jumps  $\bxi(j) := \bv{J}(j) - \bv{c}(\tau(j) - \tau(j-1)), $ $ j \geq 1,$    $ \bv{c}:= (c_1, \ldots, c_d), $ hitting the set  $\bv{u} + {\rm cl}(Q^+), \bv{u} := (u_1, \ldots, u_d) \in Q^+$ being the vector of initial reserves of the $d$ companies (here and in what follows, ${\rm cl}(V)$ and ${\rm int} (V)$ stand for the closure and interior of the set $V$, respectively):
\[
\{\tau_{\rm sim} <\infty\} =  \{\eta (  \bv{u} + {\rm cl}(Q^+)) <\infty \}, \quad\mbox{where}\quad \eta (V) :=\inf\{n\ge 0: \bv{S}(n)\in V\}
\]
is the first hitting time   of the  Borel set $V \subset {\mathbb R}^d $ by the RW~$\bv{S}$. Further assuming that $\bv{u}= s\bv{g}$ for a fixed $\bv{g}\in Q^+$ and an $s>0$,  we see that $\bv{u} + {\rm cl}(Q^+) = sG,$ where
\[
G:=\bv{g}+ {\rm cl}(Q^+).
\]
Therefore the problem of finding the asymptotics of $\psi_{\rm sim} (  s\bv{g})$ as  $s\to \infty$ dealt with in Theorem~5 of~\cite{pal1} is   reduced to a special case of the main problem considered in~\cite{mainpaper}, i.e., computing  the asymptotics of  the probability
\begin{align}
\label{PsG}
\mathbb{P}(\eta (s G) <\infty), \quad s\to\infty.
\end{align}

However, as we already said, the main condition imposed in~\cite{mainpaper} on the admissible sets $G$ in~\eqref{PsG} was that the boundary of  $sG$ at the  ``most probable point" of that set is smooth (for precise definitions, see pp.\,248, 256 in~\cite{mainpaper}). This is not satisfied   in the most interesting case of  our ruin problem, where that point is located at the ``corner" $s\bv{g}$ of the set $sG$ (which means, roughly speaking, that  given that the RW $\bv{S}$ eventually hits $sG$, it is most likely that it does that in vicinity of that point~$s\bv{g}$). Thus the results of~\cite{mainpaper} are not  applicable in that case. In this paper, we extend them to such situations, obtaining asymptotics for~\eqref{PsG} of the form somewhat different from those in the ``smooth boundary case''. In particular, in the case $d = 2$ our result implies the relation in the second line in~\eqref{Palm_thm} for our Sparre Anderson--type model.

Roughly speaking, the asymptotics of~\eqref{PsG} in the $d$-dimensional case, when the boundary of $G$ is smooth in vicinity of the most probable point,  was derived in~\cite{mainpaper} as follows.  Let
\begin{equation}
\label{deltay}
  \Delta[\bv{y}) := \prod_{j = 1}^d [y_j, y_{j} + \Delta)
\end{equation}
be the cube with the ``left-bottom'' corner $\bv{y}$ and edge length $\Delta > 0$.  Starting with the representation
\begin{equation}
\label{sum_sec1}
\mathbb{P}\big( \eta(sG) < \infty \big) = \sum_{n \geq 1} \mathbb{P}\big( \eta(sG) = n \big),
\end{equation}
one computes the value of the summands on the RHS of~\eqref{sum_sec1} by summing up the terms of the form
\begin{equation}
\label{strongmarkov}
\mathbb{P}\big(\eta(sG) = n \big| \bv{S}(n) \in \Delta[\bv{y}) \big)\mathbb{P}\big( \bv{S}(n) \in \Delta[\bv{y}) \big)
\end{equation}
over $\bv{y}$-values on a $\Delta$-grid in a half-space (used instead of~$sG$ when $s$ is large, which is possible since the boundary of the set is smooth in vicinity of the most probable point). The second factor in~\eqref{strongmarkov} is evaluated using the integro-local large deviation theorem, whereas the first one can be computed using the smoothness of the boundary $\partial G$ by reducing the problem to evaluating the distribution of the global minimum of a one-dimensional RW with a positive trend. Finally, the sum on the RHS of~\eqref{sum_sec1} is computed using the Laplace method~\cite{erd_asy}.

However, a direct implementation of the above scheme in our case encounters serious technical difficulties (in particular, there is no above-mentioned  reduction to the univariate problem when computing the first factor in~\eqref{strongmarkov}). That may explain why~\cite{mainpaper} only dealt with the smooth boundary case. In the present  paper, we   employ a more feasible approach which introduces an auxiliary half-space $\widehat{H} \supset G,$ $\partial \widehat{H} \cap G = \{ \bv{g}\},$ such that the logarithmic asymptotics of $\mathbb{P}\big( \eta(s\widehat{H}) < \infty \big)$ as $s\to\infty$ are the same as for $\mathbb{P}\big(\eta(sG) < \infty \big)$   and the ``most probable points" for $s\widehat{H}$ and $sG$ both coincide with $s\bv{g}$ (cf.\ Lemmata~\ref{halfspacempp1} and~\ref{DG} in Section~\ref{proofs_section}). Then, in Theorem~\ref{mainthm1} below,  we use the approach from~\cite{mainpaper} together with the integro-local large deviation theorem and the total probability formula to derive the fine asymptotics for the probabilities of the form
\begin{equation}
\label{mid_step}
\mathbb{P}\big( \eta(sG) <\infty, \eta(s\widehat{H} ) = n, \bv{S}(n) \in s\bv{g} + \Delta[\bv{y}) \big).
\end{equation}
Next we partition $\widehat{H}$ into a narrow half-cylinder with the generatrix orthogonal to $\partial \widehat{H},$ that covers the ``very corner of $sG$'', and its complement in $\widehat{H}$ (as shown in Fig.~\ref{figure:3}). The main contribution to $\mathbb{P}\big( \eta(sG) < \infty \big)$ is computed by ``integrating''~\eqref{mid_step} in $\bv{y}$ over that half-cylinder and then by summing up the resulting expressions (denoted by $P_{3,n}$ in the proof of Theorem~\ref{finalthm} in Section~\ref{proofs_section}, see~\eqref{53a}, \eqref{p3n_sum}) over $n$ using the Laplace method. The total contribution of the terms~\eqref{mid_step} with $\bv{y}$ outside that half-cylinder (which is equal to the sum $P_1+P_2,$ cf.~\eqref{PPP}) is shown to be negligibly small compared to the above-mentioned main term.

To give  precise definitions of the key concepts like the ``most probable point" and exact formulations of our results, we will need to introduce some notations and a number of important concepts from the large deviation theory for RWs with i.i.d.\ jumps in $\mathbb{R}^d$. This is done in Section~\ref{Sec_2}. That section also contains a summary of the key properties of the deviation rate functions defined and discussed there, some auxiliary constructions and the main result (Theorem~\ref{finalthm}) of the paper as well.  Further auxiliary constructions and assertions are presented in Section~\ref{proofs_section}, together with the proof of Theorem~\ref{finalthm}.

\section{Some Preliminaries and the Main Result}
\label{Sec_2}

In this section, we will present and discuss the key concepts needed for the Cram\'{e}r large deviation theory, in particular, the first and second (deviation) rate functions. For introduction to large deviation theory for univariate RWs and main properties of the first rate function, see Chapter~9 in~\cite{mainbook2}.

Unless stated otherwise, all the concepts and properties discussed in this section were introduced and/or established in~\cite{paper20, paper23}. Moreover, we will introduce three important conditions assumed to be met in the main theorem that we state at the end of the section. We conclude this section with remarks commenting on the difference between the forms of the asymptotics of the hitting probabilities in the smooth and non-smooth cases, and also on possible extensions of our main result.



For vectors $\bv{u} = (u_1, \ldots, u_d)$, $\bv{v} = (v_1,  \ldots, v_d) \in \mathbb{R}^d$, $d \geq 2$, we set $\langle \bv{u}, \bv{v} \rangle := \sum_{i=1}^d u_iv_i$ and $ \|\bv{v}\| := \langle \bv{v}, \bv{v} \rangle ^{1/2}$. For a function $f \in C^1(S),$ $S$ being an open subset of~$\mathbb{R}^d$,  and~$\bv{x}\in \label{key}S,$ we denote by   $  f'(\bv{x}) := \nabla_{\sbv{x}}f(\bv{x}),$ where $ \nabla_{\sbv{x}} := \big(\frac{\partial }{\partial x_1}, \ldots, \frac{\partial }{\partial x_d}\big),$ the gradient of $f$  at  $\bv{x}$.  (On a couple of occasions, where it may be unclear with respect to what variable the gradient is computed, we will still have to use the nabla with the respective subscript.)
By $f''$ we denote the Hessian of the function $f \in C^2(S):$ using $^T$ for transposition,
\begin{equation*}\label{2nddev}
f''(\bv{x}) := \nabla_{\sbv{x}}^T\nabla_{\sbv{x}}\, f(\bv{x}).
\end{equation*}

Let $\bxi$ be a  random vector in $\mathbb{R}^d$ satisfying the following condition:
\smallskip

 [$\textbf{C}_1$]~{\em The distribution $F$ of $\bxi$ is non-lattice and there is no hyperplane $K = \{ \bx :  \lag \bv{a}, \bx \rag = c \} \subset \mathbb{R}^d$ such that $F(K) = 1.$}
\smallskip

The moment generating function  of $\bv{\xi} \in \mathbb{R}^d$ is denoted by
\begin{equation*}\label{cm}
 \psi(\bv{\lambda}):= \mathbb{E}e^{\langle \sbv{\lambda}, \sbv{\xi} \rangle} = \int e^{\langle \sbv{\lambda}, \sbv{x} \rangle} F(d\bv{x}), \hspace{ 6 mm} \bla \in \mathbb{R}^d.
\end{equation*}
Let $\Theta_{\psi} := \{ \bv{\lambda} \in \mathbb{R}^d:  \psi(\bv{\lambda}) < \infty \}$ be the    set on which~$\psi$ is finite. It is well known that  $\Theta_{\psi}  $ is convex. We will need the following Cram\'{e}r moment condition imposed on $F$:
\smallskip

[$\textbf{C}_2$]~\textit{$ \Theta_{\psi}$ contains a non-empty open set}.
\smallskip

Under condition [$\textbf{C}_2$], for a fixed $\bla \in \Theta_{\psi}$, the Cram\'{e}r transform $F_{\sbv{\lambda}}$ of the distribution $F$ for that $\bv{\lambda}$ is defined as the probability distribution given by
\begin{equation*}
\label{cramtransF}
 F_{\sbv{\lambda}} (W) := \frac{\mathbb{E} (e^{\langle \sbv{\lambda},\sbv{\xi}  \rangle} ; \bv{\xi} \in W)}{\psi(\bv{\lambda})}, \hspace*{4 mm} W \in \mathcal{B}(\mathbb{R}^d),
\end{equation*}
where $\mathcal{B}(\mathbb{R}^d)$ is the $\sigma$-algebra of Borel subsets of $\mathbb{R}^d$  (see e.g.~\cite{paper7,paper8}). Denote by $\bxi_{(\sbla)}$ a random vector with distribution $F_{\sbla}$.

The  first rate function $\Lambda(\bv{\alpha})$ for the random vector $\bv{\xi}$ is defined as
\begin{equation}\label{rf1}
   \Lambda(\bv{\alpha}) :=   \sup_{\sbv{\lambda} \in \Theta_{\psi}} (\langle \bv{\alpha},\bv{\lambda} \rangle - \ln\psi(\bv{\lambda})), \hspace*{4 mm} \bv{\alpha} \in \mathbb{R}^d,
\end{equation}
which is the Legendre transform of the cumulant function $ \ln\psi(\bv{\lambda})$.
For $\bal \in \mathbb{R}^d$, denote by $\bla(\bal)$ the vector $\bla$ at which the upper bound in \eqref{rf1} is attained (when such a vector exists, in which case it is always unique):
\begin{equation*}
 \Lambda(\bv{\alpha})  =\langle \bv{\alpha},\bv{\lambda}(\bal)\rangle - \ln\psi(\bv{\lambda}(\bal)).
\end{equation*}

Define the Cram\'{e}r range $\Omega_{\Lambda}$ for $F$ as the set of all vectors that can be obtained as the expectations of the Cram\'{e}r transforms $F_{\sbla}$ of $\bxi$ for $\bla \in {\rm int}(\Theta_{\psi})$:
\begin{equation*}
\Omega_{\Lambda} := \bigg\{ \bal = \frac{\psi'(\bla)}{\psi(\bla)} \equiv (\ln\psi(\bla))', \, \bla \in {\rm int}(\Theta_{\psi})  \bigg\}.
\end{equation*}
The rate function $\Lambda$ is convex on $\mathbb{R}^d$ and strictly convex and analytic on $\Omega_{\Lambda}$.  Moreover, for $\bal\in\Omega_{\Lambda}$, one has (cf.~\cite{paper20})
\begin{equation}
\label{bla(bal)}
\bla(\bal) = \Lambda'(\bal).
\end{equation}

Introduce notations $F^{(\sbal)} := F_{\sbla(\sbal)} $ and $\bxi^{(\sbal)} := \bxi_{(\sbla(\sbal))} $ and define
\begin{equation}
\label{12a}
 \bv{S}^{(\sbal)}(n) := \sum_{i=1}^n \bv{\xi}^{(\sbal)}(i), \qquad n =0, 1, \ldots ,
\end{equation} where $\bxi^{(\sbal)}(i)$ are independent copies of $\bxi^{(\sbal)}$.  For $\bal \in \Omega_{\Lambda}$, one can easily verify that
\begin{equation*}
\mathbb{E}\, \bv{\xi}^{(\sbv{\alpha})}
 = \ln \psi(\bla)' \big|_{\sbla = \sbla(\sbal)} = \bv{\alpha},  \quad
 \mbox{ Cov }\bv{\xi}^{(\sbal)} = \ln \psi(\bla)'' \big|_{\sbla = \sbla(\sbal)} = \big( \Lambda''(\bv{\alpha}) \big)^{-1}.
\end{equation*}
Denote by
\begin{equation*}\label{varalpha}
\sigma^2(\bv{\alpha}) := \mbox{det}\,  \mbox{Cov}\,\bv{\xi}^{(\sbal)} = \text{det}\, \big( \Lambda''(\bal) \big)^{-1}
\end{equation*} the determinant of the covariance matrix of $\bv{\xi}^{(\sbal)}$.

The probabilistic interpretation of the first rate function is as follows (see e.g.\ \cite{mainpaper}): for any $\bal \in \mathbb{R}^d$, letting $U_{\varepsilon}(\bal)$ denote the $\varepsilon$-neighborhood of $\bal$,
\begin{equation*}
\Lambda(\bal) = -\lim_{\varepsilon \rightarrow 0}\lim_{n \rightarrow \infty} \frac{1}{n}\ln \mathbb{P}\Big(\frac{\bv{S}(n)}{n} \in U_{\varepsilon}(\bal) \Big),
\end{equation*}
Accordingly, for a set $B \subset \mathbb{R}^d$,  any point $\bal \in B$ such that
\begin{equation}\label{MPPdef}
\Lambda(\bal) = \inf_{\sbv{v} \in B}\Lambda(\bv{v})
\end{equation}
is called the most probable point (MPP) of~$B$.  If such an   $\bal$ is unique, we denote it by
\begin{equation}
\label{MPP}
\bal[B] := \argminA_{\sbv{v} \in B}\Lambda(\bv{v}).
\end{equation}

Since $\Lambda$ is convex,  $\Lambda(\bal) \geq 0$ for any $\bal \in \mathbb{R}^d$ and $\Lambda(\bal) = 0$ if and only if $\bal = \mathbb{E} \bxi,$ for $\mathbb{E} \bxi \notin B$ one always has
\begin{equation}
\label{aldB}
\bal[B] = \bal[\partial B],
\end{equation}
so that the MPP in that case is on the boundary of $B$.

The concept of the MPP for the  set $G$ is  related to the behavior of   $\mathbb{P}\big(\SU{n} \in sG \big)$ as $s \rightarrow \infty$, $n \asymp s$. However, we are interested in the probability of the event that the trajectory $\{\SU{n} \}_{n \geq 1}$ ever hits $sG$. To deal with that problem, we need to introduce the concept of  the second rate function $D$ defined in \cite{paper23} as follows:
\begin{equation}
\label{Dv}
D(\bv{v}) := \inf_{t > 0} \frac{\Lambda(t\bv{v})}{t}, \quad \bv{v} \in \mathbb{R}^d.
\end{equation}
This function  admits an alternative representation   (see Theorem~1 in~\cite{paper23}) of the form
\begin{equation}\label{Dv1}
D(\bv{v})= \sup \{ \lag \bv{\lambda},\bv{v} \rag : \psi (\bv{\lambda})\le 1 \}, \quad \bv{v} \in \Omega_\Lambda. 
\end{equation}
The following    key properties  [$\textbf{D}_1$]--[$\textbf{D}_4$] of the second rate function will be used below. The first one is an immediate consequence of representation~\eqref{Dv1}:

\medskip
 [$\textbf{D}_1$]~~The function $D$ is convex on $\mathbb{R}^d$.

\medskip Now introduce $t(\bv{v})$ as  the point at which the infimum  in \eqref{Dv} is attained:
\begin{equation*}\label{uniquer}
D(\bv{v}) = \frac{\Lambda( t(\bv{v})\bv{v})}{t(\bv{v})}.
\end{equation*}
The next property is established in Theorem~2 in~\cite{paper23}.

\medskip

 [$\textbf{D}_2$]~~For any $\bv{v} \in \Omega_{\Lambda},$ the point $t(\bv{v})\bv{v}$ is an analyticity point of $\Lambda$ and $t(\bv{v})$ is unique.

\medskip

It will also be convenient to consider the reciprocal quantity
\begin{equation*}\label{u=1/t}
u := \frac{1}{t}.
\end{equation*}
For $\bv{v} \in \mathbb{R}^d$ and $B \subset \mathbb{R}^d$, put
\begin{equation}\label{doubleinf1}
D_u(\bv{v}) := u \Lambda \Big( \frac{\bv{v}}{u}\Big), \qquad D_u(B) := \inf_{\sbv{v} \in B} D_u(\bv{v}),
\end{equation} and let
\begin{equation}
\label{doubleinf}
 D(B) := \inf_{u > 0}D_u(B) = \inf_{u > 0}\inf_{\sbv{v} \in B}  u \Lambda \Big( \frac{\bv{v}}{u}\Big).
\end{equation}

The value
\begin{equation}
\label{mpt}
u_B := \argminA_{u > 0} D_u(B)
\end{equation}
is called the most probable time (MPT) for the set $B \subset \mathbb{R}^d$.
We put
\begin{equation}
\label{rB}
r_B := 1/u_B.
\end{equation}
The reason for calling $u_B$ the MPT is as follows. The problem of hitting the remote set $sB$ ($s \rightarrow \infty$) by the RW $\{\SU{n} \}_{n \geq 1}$ can be re-stated in the scaled time-space framework  as that of hitting the original set $B$ by the process $\{s^{-1}\bv{S}(\floor{su})\}_{ u > 0}$. Then, given that that continuous time process hits $B$, it is most likely to do so at a time close to~$u_B$.

We refer to the point $\bv{b}\in B$ such that $D(\bv{b})=D (B)$ as the global  MPP (GMPP) for the set~$B$. The probabilistic meaning of the GMPP  is that, in a setting where $s\to\infty,$  if our RW ever hits the set~$sB$, it is most likely that it will do that in the vicinity of~$s\bv{b}$ (i.e., within a  distance~$o(s)$ therefrom).

\smallskip

 We will need two more properties of the function $D$.

 \medskip

[$\textbf{D}_3$]~~If one has $D(B) = D(\bv{b})$ for a $\bv{b} \in B \subset \mathbb{R}^d$, then
\begin{equation}\label{D7}
 D' (\bv{v})\big|_{\sbv{v} = \sbv{b}} =  \Lambda' (\bal)\big|_{\sbal = r_B\sbv{b}} = \bla(r_B\bv{b}).
\end{equation}

The latter equality in \eqref{D7} is the  known key property~\eqref{bla(bal)} of the rate function $\Lambda$. To  prove the former one, note that,  from [$\textbf{D}_2$] and the implicit function theorem, one has
\begin{equation*}
 \frac{\partial}{\partial u}D_u(\bv{v})\Big|_{u = u(\sbv{v})} = 0, \quad u(\bv{v}) := \frac{1}{t(\bv{v})}.
\end{equation*}
Therefore,  as $ D (\bv{v})=D_{u(\sbv{v})}(\bv{v}), $ using the chain rule results in
\begin{align*}
 D'  (\bv{v}) &= \bigl( D_{u(\sbv{v})}(\bv{v})\bigr)'
=
 \frac{\partial}{\partial u}D_u(\bv{v})\Big|_{u = u(\sbv{v})} u'(\bv{v})
  + \bigl( \nabla_{\sbv{v}}D_u(\bv{v})\bigr)\Big|_{u = u(\sbv{v})} \\
&= \Big(\nabla_{\sbv{v}} u \Lambda \Big(\frac{\bv{v}}{u}\Big)\Big) \Big|_{u = u(\sbv{v})}
=
   \Lambda'(\bal)\Big|_{\sbal = \sbv{v}/u(\sbv{v})}.
\end{align*}
As $u(\bv{b}) = u_B = 1/r_B$ by assumption,  property [$\textbf{D}_3$] is proved.

\medskip

[$\textbf{D}_4$]~~$D_u(\bv{v}) = u \Lambda (\bv{v}/u )$ is a convex function of $(u, \bv{v}) \in \mathbb{R}^+\times \mathbb{R}^d.$

\medskip

To prove this property, let $(u_1, \bv{v}_1), (u_2, \bv{v}_2) \in \mathbb{R}^+ \times \mathbb{R}^d$ be any two points in the time-space, $a \in (0,1)$.
The function $\Lambda$ is convex, so that
 \begin{equation*}
 \Lambda(p\bal_1 + (1-p)\bal_2) \le p\Lambda(\bal_1) + (1-p)\Lambda(\bal_2), \quad p \in (0,1), \quad \bal_1, \bal_2 \in \Omega_{\Lambda}.
 \end{equation*} By choosing $p := \frac{au_1}{au_1 + (1-a)u_2}$, $\bal_1 := \bv{v}_1/u_1$ and $\bal_2 := \bv{v}_2/u_2$ in the above inequality and multiplying both sides by $au_1 + (1-a)u_2,$ we get
\begin{equation}
\label{convexineq}
(au_1+(1-a)u_2) \Lambda \Big(\frac{a_1\bv{v}_1 + (1-a)\bv{v}_2}{au_1 + (1-a)u_2}\Big) \leq au_1\Lambda \Big( \frac{\bv{v}_1}{u_1} \Big) + (1-a)u_2\Lambda \Big( \frac{\bv{v}_2}{u_2} \Big),
\end{equation} which establishes the desired convexity. Property [$\textbf{D}_4$] is proved.

\medskip

For $r > 0$, let
\begin{equation}\label{type-one-ls}
L(r) := \{ \bv{v} \in \mathbb{R}^d: \Lambda(\bv{v}) = \Lambda(\bal(r)) \},
\end{equation}
be the level surface (line when $d = 2$) of  $\Lambda$ that passes  through the point
\begin{equation}
\label{balrg}
\bal(r) := \bal[rG]
\end{equation}
(see \eqref{MPP}; we assume here that there exists a unique point $\bal$ satisfying~\eqref{MPPdef} with $B=rG$) and introduce the respective superlevel set
\begin{equation*}
\widehat{L} (r) := \{ \bv{v} \in \mathbb{R}^d: \Lambda(\bv{v}) \geq \Lambda(\bal(r)) \}.
\end{equation*}

\begin{lemma}\label{uniquemmp}
Let $r > 0$. If there is an $\bal_0 \in \Omega_{\Lambda} $ such that $\bal_0$ is an  MPP for the set $rG,$  then this MPP is unique for $rG$:
\begin{equation}\label{unique}
\{ \bal_0\} = \{\bal(r)\} = L(r) \cap rG.
\end{equation}
\end{lemma}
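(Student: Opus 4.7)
The plan is to use the strict convexity of $\Lambda$ on $\Omega_\Lambda$, together with the convexity of both $\Lambda$ on $\mathbb{R}^d$ and of the target set $rG = r\bv{g} + \mathrm{cl}(Q^+)$, to rule out more than one minimizer. Note that $rG$ is a convex set (translate of a closed orthant), and $\bal_0 \in rG \cap \Omega_\Lambda$ is given to be an MPP, i.e.\ a minimizer of $\Lambda$ on $rG$ with value $m := \Lambda(\bal_0) = \inf_{\sbv{v}\in rG}\Lambda(\bv{v})$. Also recall that $\Omega_\Lambda$, being the image of the open convex set $\mathrm{int}(\Theta_\psi)$ under the diffeomorphism $\bla \mapsto (\ln\psi)'(\bla)$ (the Hessian of $\ln\psi$ being positive definite by condition [$\textbf{C}_1$]), is open.

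For the uniqueness claim, I would argue by contradiction: suppose there exists a second MPP $\bal_1 \in rG$ with $\bal_1 \ne \bal_0$. For each $\varepsilon \in [0,1]$ set $\bal_\varepsilon := (1-\varepsilon)\bal_0 + \varepsilon\bal_1$. Convexity of $rG$ gives $\bal_\varepsilon \in rG$, and convexity of $\Lambda$ gives $\Lambda(\bal_\varepsilon) \le (1-\varepsilon)m + \varepsilon m = m$, so (since $m$ is the infimum) every point of the segment is itself an MPP. Because $\Omega_\Lambda$ is open and $\bal_0 \in \Omega_\Lambda$, there is $\varepsilon_0 > 0$ so small that the points $\bal_0$ and $\bv{w} := \bal_{\varepsilon_0}$ together with their midpoint $\tfrac12(\bal_0+\bv{w}) = \bal_{\varepsilon_0/2}$ all lie in $\Omega_\Lambda$. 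Since $\bv{w} \ne \bal_0$, the strict convexity of $\Lambda$ on $\Omega_\Lambda$ yields
\begin{equation*}
\Lambda\!\left(\tfrac12(\bal_0 + \bv{w})\right) < \tfrac12\Lambda(\bal_0) + \tfrac12\Lambda(\bv{w}) = m,
\end{equation*}
but $\tfrac12(\bal_0+\bv{w}) \in rG$, contradicting the definition of $m$ as the infimum. Hence the MPP of $rG$ is unique and equal to $\bal_0$, which also makes $\bal(r) = \bal[rG]$ well-defined by \eqref{MPP} and equal to $\bal_0$.

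For the second equality in \eqref{unique}, observe that by the definition of $L(r)$ we have $L(r)\cap rG = \{\bv{v}\in rG : \Lambda(\bv{v}) = \Lambda(\bal(r)) = m\}$, which is precisely the set of minimizers of $\Lambda$ on $rG$, i.e.\ the set of MPPs of $rG$. The uniqueness just established collapses this set to $\{\bal_0\}$.

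The only delicate point is the openness of $\Omega_\Lambda$ around $\bal_0$, which is needed to guarantee that the segment from $\bal_0$ toward $\bal_1$ immediately enters $\Omega_\Lambda$; this follows from the inverse function theorem applied to $\bla \mapsto (\ln\psi)'(\bla)$ on $\mathrm{int}(\Theta_\psi)$ under the non-degeneracy condition [$\textbf{C}_1$]. Once this is in hand, the rest is a clean application of strict convexity.
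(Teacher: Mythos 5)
Your proposal is correct and follows essentially the same route as the paper's proof: both observe that $\Lambda$ is constant (equal to its infimum on $rG$) along the segment joining two putative MPPs, and then contradict this via the openness of $\Omega_\Lambda$ at $\bal_0$ together with the strict convexity of $\Lambda$ there. The only difference is cosmetic—you derive the constancy of $\Lambda$ on the segment directly from convexity, while the paper routes through the convex sublevel set $\widetilde L$; both are fine.
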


The proof of Lemma~\ref{uniquemmp} is given in Section 3.

Consider the following condition that depends on parameter $r > 0$:

\medskip

[\textbf{C}$_3(r)$]~\textit{One has }
\begin{equation*}
%
\bla(r\bv{g}) \in Q^+, \quad r\bv{g} \in \Omega_{\Lambda}, \quad \lag \mathbb{E} \bxi, \bla(r\bv{g}) \rag < 0.
\end{equation*}
The first part of the condition means that the ``external'' normal vector to the   level surface  of the convex function  $\Lambda$    at the point $r\bv{g}$  points inwards~$rG$, which means that the vertex $r\bv{g}$ is an MPP for $rG$. Under the second part of the condition, this MPP  for  $rG$ is unique by Lemma~\ref{uniquemmp}: $r\bv{g}= \bal(r),$ so that $\bla(r\bv{g})$ coincides with the vector
\begin{equation}
\label{normaln}
 \bv{N}(r) :=    \Lambda' (\bal)\big|_{\sbal = \sbal(r)}= \bla(\bal(r)), \quad r > 0,
\end{equation}
which is a normal vector to the  level surface $L(r)$ at the point $\bal(r)$ pointing inwards~$\widehat{L}(r)$ (the above definition of $\bv{N}(r)$   makes sense whenever $rG$ has a unique MPP).  Since always $ \bv{N}(r) \in {\rm cl}(Q^+)$, the first  part of [\textbf{C}$_3(r)$]   excludes the case when the normal  to $L(r)$ at the point $ \bal(r)=r\bv{g}$ belongs to the boundary of the set $rG$.

The main result of the present paper is the following assertion.

\begin{theorem}
\label{finalthm}
If conditions  {\rm [\textbf{C}$_1$], [\textbf{C}$_2$]}    and {\rm [\textbf{C}$_3(r_G)$]} are satisfied, where $r_G$ is defined by~\eqref{rB} with $B=G$, then
\begin{equation}
\label{lastassertion}
\mathbb{P}\big( \eta(sG) < \infty \big) = As^{-(d-1)/2}e^{-sD(G)}(1+o(1))
\quad as\quad s \rightarrow \infty,
\end{equation}
where the value of the constant $A \in (0, \infty)$ is given in~\eqref{AAA}.

\end{theorem}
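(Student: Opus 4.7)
\emph{Strategy.} The approach follows the outline given in the introduction. Under [\textbf{C}$_3(r_G)$], the vector $\bv{N}(r_G)\in Q^+$ is normal to the level surface $L(r_G)$ at the corner $\bv{g}$; set $\widehat{H}:=\{\bv{x}:\langle\bv{N}(r_G),\bv{x}-\bv{g}\rangle\geq 0\}$, the half-space tangent to $L(r_G)$ at $\bv{g}$. Then $\widehat{H}\supset G$ with $\partial\widehat{H}\cap G=\{\bv{g}\}$, and by Lemmata~\ref{halfspacempp1} and~\ref{DG} the sets $\widehat{H}$ and $G$ share $\bv{g}$ as their unique GMPP and have common second-rate value $D(\widehat{H})=D(G)$ and common MPT $u_G$. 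Since $\{\eta(sG)<\infty\}\subset\{\eta(s\widehat{H})<\infty\}$, applying the strong Markov property at $\eta(s\widehat{H})$ yields
\begin{equation*}
\mathbb{P}(\eta(sG)<\infty)=\sum_{n\geq 1}\sum_{\sbv{y}}\mathbb{P}\bigl(\eta(sG)<\infty,\ \eta(s\widehat{H})=n,\ \bv{S}(n)\in s\bv{g}+\Delta[\bv{y})\bigr),
\end{equation*}
with $\bv{y}$ running over a $\Delta$-lattice in $\widehat{H}-s\bv{g}$. Theorem~\ref{mainthm1} (which I would establish separately) will supply the fine integro-local asymptotics of each summand~\eqref{mid_step}: an exponential factor of the form $e^{-sD(G)-(\text{Gaussian quadratic in the transverse part of }\sbv{y})}$ times the probability that, after the Cram\'er tilt $F\mapsto F^{(r_G\sbv{g})}$, a random walk with mean $r_G\bv{g}\in Q^+$ hits the shifted orthant starting from near the corner.

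\emph{Localisation.} Next, split the $\bv{y}$-sum over a narrow half-cylinder $\mathcal{C}$ with axis $-\bv{N}(r_G)$ through the origin and transverse radius of order $\sqrt{s}\log s$, and its complement in $\widehat{H}-s\bv{g}$. For $\bv{y}$ outside $\mathcal{C}$ (contributions $P_1+P_2$ of~\eqref{PPP}), the integro-local exponent incurs an additional large-deviation cost of order $(\log s)^2$ by the strict convexity of $\Lambda$ on $\Omega_\Lambda$, forcing $P_1+P_2=o(s^{-(d-1)/2}e^{-sD(G)})$. For $\bv{y}\in\mathcal{C}$, the integro-local Gaussian factor in the $d-1$ transverse coordinates of $\bv{y}$ converges (after summation over the $\Delta$-lattice) to its integral over $\mathbb{R}^{d-1}$, and the post-tilt corner-hitting probability becomes a bounded, continuous, strictly positive function of the rescaled transverse coordinates, determined by the joint ladder behaviour of the $d$ coordinate walks.

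\emph{Summation and obstacle.} Summing the main contribution $P_{3,n}$ of~\eqref{p3n_sum} over $\bv{y}\in\mathcal{C}$ absorbs the transverse scale $s^{(d-1)/2}$; applying Laplace's method to $\sum_n P_{3,n}$ at the saddle $n\approx su_G$ (where the combined exponent is minimised, using [\textbf{D}$_4$] for the required convexity) yields an additional $\sqrt{s}$. Combined with the integro-local density $n^{-d/2}\asymp s^{-d/2}$ the prefactor collapses to $s^{-(d-1)/2}$, and assembling the Gaussian normalisation, the Jacobian from Laplace, the ladder-height constant, and $\sigma(r_G\bv{g})^{-1}$ produces the explicit $A$ in~\eqref{AAA}. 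The main obstacle is twofold: proving Theorem~\ref{mainthm1} with uniform enough remainder to justify interchanging summation and Gaussian limit and to bound $P_1+P_2$; and identifying the tilted corner-hitting probability as a non-degenerate function bounded away from $0$. The latter is where [\textbf{C}$_3(r_G)$] enters decisively, since $\bla(r_G\bv{g})=\bv{N}(r_G)\in Q^+$ forces every coordinate of the post-tilt drift to be strictly positive, so that none of the $d$ coordinate events degenerates and $A\in(0,\infty)$.
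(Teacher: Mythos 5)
Your overall strategy matches the paper's: approximate $G$ near the vertex by the tangent half-space $\widehat H(r_G)$, invoke Lemmata~\ref{halfspacempp1} and~\ref{DG} to identify the MPT and GMPP of $\widehat H(r_G)$ with those of $G$, localize in the time $n$ and in the entry point to $s\widehat H(r_G)$, and finish with Laplace's method at $n\approx su_G$. However, there is a genuine error at the crux step---the very place the power $s^{-(d-1)/2}$ is supposed to emerge---and your description is internally inconsistent.

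You write that the transverse Gaussian factor ``converges to its integral over $\mathbb R^{d-1}$'' while the post-tilt corner-hitting probability ``becomes a bounded, continuous, strictly positive function of the rescaled transverse coordinates''. Both claims are wrong. If they were right, the prefactor would be $n^{-d/2}\cdot n^{(d-1)/2}\cdot n^{1/2}\asymp s^0$, i.e.\ the smooth-boundary answer~\eqref{smoothlastassertion}, not~\eqref{lastassertion}. As Remark~\ref{R1} explains, the corner-hitting probability $p(\bv z)=\mathbb P\bigl(\eta({\rm cl}(Q^+)-\bv z)<\infty\bigr)$ is a function of the \emph{unscaled} displacement and decays exponentially fast away from the corner (this is~\eqref{new50}, driven by $\langle\mathbb E\bxi,\bv{\zeta}\rangle<0$). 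That exponential decay localizes the transverse integration to an $O(1)$ neighbourhood of $s\bv g$, on which the Gaussian is flat at its peak height $\asymp n^{-(d-1)/2}$; the transverse sum therefore contributes only an $O(1)$ constant (the integral $E$ in~\eqref{AAA}), not a factor $s^{(d-1)/2}$. The correct arithmetic is $n^{-d/2}\cdot O(1)\cdot n^{1/2}\asymp s^{-(d-1)/2}$; your final prefactor is correct only because you silently dropped the $s^{(d-1)/2}$ your own sentence claims to produce.

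A related problem is the cylinder radius. The paper's half-cylinder $V_3$ has transverse radius $M\ln s$, with $M\to\infty$ slowly; yours has radius $\sqrt s\log s$. With your wider cylinder, the Gaussian-tail penalty of order $(\log s)^2$ on the complement is fine as far as it goes, but then the entire burden of localizing to an $O(1)$ region falls inside the main term $P_3$, and your treatment of that term (see the previous paragraph) does not carry the needed localization. The paper instead makes $V_3$ narrow enough that the Gaussian factor is trivially $\approx 1$ there, and bounds $P_1$ and $P_2$ by two mechanisms, neither of which is a Gaussian tail: for $P_1$, the exponential decay~\eqref{new50} of $p$ via a cone construction~\eqref{Cone_C},~\eqref{no2} (needed when $\mathbb E\bxi\notin -Q^+$); for $P_2$, the geometric inclusion $V_2\subset s_1\widehat H(r_G)$ with $s_1=s+c_2M\ln s$, which costs a further factor $s^{-c_2MD(G)}$. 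Your proposal contains neither ingredient, and the Gaussian-tail estimate alone does not control the intermediate range $M\ln s\ll\|\cdot\|\ll\sqrt s$.

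Finally, describing the constant as coming from ``the joint ladder behaviour of the $d$ coordinate walks'' is looser than what the paper proves: $A$ in~\eqref{AAA} is an integral of $p$ against a single univariate ladder factor $q$ along the generatrix direction $\bv{\zeta}$ only, normalized by $\sigma(\bal(1/u_G))$ and the Laplace curvature $\sigma^*_D$. The positivity of $A$ does follow from $\bv N(r_G)\in Q^+$, as you note.
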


\begin{remark}\label{R1}
{\rm
In the ``smooth case'', when the boundary of $ G$ is twice continuously differentiable in the vicinity of the GMPP  (the latter was defined  after~\eqref{rB}),  the exact asymptotics for the hitting probability was shown to have the form
\begin{equation}
\label{smoothlastassertion}
\mathbb{P}\big( \eta(sG) < \infty  \big) = B e^{-sD(G)}(1 + o(1)), \quad s \rightarrow \infty,
\end{equation}
where the constant $B > 0$ (depending on $F$ and $G$) can be written down explicitly (see Theorem~7 in \cite{mainpaper}). Thus, the qualitative difference between the asymptotics~\eqref{lastassertion} in the case of the orthant $G$ with the GMPP at its vertex and the asymptotics \eqref{smoothlastassertion} in the ``smooth case'' is the presence of the power factor $s^{-(d-1)/2}$ in the former formulation (cf.\ the factor $s^{-1/2}$ in the second line of \eqref{Palm_thm}, the asymptotics of the ruin probability in the special bivariate case from \cite{pal1}).

The presence of that power factor can be roughly explained as follows. The distribution of the location of the first hitting point of the auxiliary half-space $s\widehat{H}(r_G) \supset sG$ (defined below, see~\eqref{Hr} and~\eqref{31a}) is close to the normal law on its boundary $sH(r_G)$ with the mean point at $s\bv{g}$ and covariance matrix proportional to~$s^{1/2}$ (see Corollary $3.2$ in \cite{mainpaper}). However, the RW $\bv{S}$ will only have a noticeable chance of hitting $sG$ at or after the time when it hits $s\widehat{H}(r_G)$ if the ``entry point'' to $s\widehat{H}(r_G)$ is basically in a finite neighborhood of the vertex point $s\bv{g}$. It is the integration over that neighborhood with respect to the above-mentioned ``almost normal" distribution ``of the scale $s^{1/2}$'' that results in the additional factor $s^{-(d-1)/2}$ on the RHS of~\eqref{lastassertion}.
}
\end{remark}

\begin{remark}
\label{re2.2}
{\rm
One can consider, in a similar way, the case where the GMPP  neither lies on the face of the orthant (which would be the ``smooth case" dealt with in  \cite{mainpaper}) nor is the vertex thereof (our case), but lies on an $m$-dimensional ($1\le m<d-1$) component of the orthant boundary. It is not hard to see from our proofs that the hitting probability asymptotics in such a case will be ``intermediate" between~\eqref{smoothlastassertion} and~\eqref{lastassertion}, with the power factor $s^{-(d-m-1)/2}$.

A rough explanation of that is similar to the one given in Remark~\ref{R1}. In that case, the distribution of the location of the first hitting point of the auxiliary half-space (of which the boundary will now contain the respective $m$-dimensional component of the orthant boundary) will again be close to the normal law on the boundary of that half-space, with the covariance matrix proportional to~$s^{1/2}$. But now, to have a noticeable chance of hitting the set $sG$, the ``entry point'' to $s\widehat{H}(r_G)$ should be within a ``short distance" from that $m$-dimensional component of the orthant boundary (rather than from the GMPP itself). So now we will have to integrate with respect to  the above-mentioned ``almost normal" distribution over a subset of the  hyperplane which is ``bounded in  $(d-m-1)$ directions", hence the resulting power factor.
}
\end{remark}

\begin{remark}
\label{rem3}
{\rm
If conditions  [\textbf{C}$_1$], [\textbf{C}$_2$], [\textbf{C}$_3(r_G)$] are met except for the last assumption that $\lag \mathbb{E} \bxi, \bv{N}(r_G) \rag < 0,$ we still have a large deviation situation provided that $\mathbb{E} \bxi \notin~{\rm cl}(Q^+).$ In that case, $\bv{g}$ will still be the GMPP for $G$, but the asymptotics of~\eqref{PsG} will be of the same form~\eqref{lastassertion} as in the smooth boundary case (except for the value of the constant $B$). The reason for that will be clear from the proof of Theorem~\ref{finalthm} (more precisely, from its part  dealing with bounding the term $P_1$). Roughly speaking, what happens in that case is that if the RW $\bv{S}$ enters the auxiliary half-space $s\widehat{H}(r_G)$ in the sector from which one can ``see'' the set $sG$ along the rays with the directional vector $\mathbb{E} \bxi$, then the RW will eventually hit $sG$ with probability bounded away from zero. The probability of hitting that part of $s\widehat{H}(r_G)$ differs from the probability of hitting the ``smooth case'' set $s\widehat{H}(r_G)$ by basically a constant factor.
}
\end{remark}

\begin{remark}
\label{rem4}
{\rm
As discussed at the beginning of this section, the RHS of~\eqref{lastassertion} gives the asymptotics of the simultaneous ruin probability $\Psi_{{\rm sim}}(s\bv{g})$ as $s \to \infty$ in the $d$-dimensional extension of the problem from~\cite{pal1}, under conditions [\textbf{C}$_1$], [\textbf{C}$_2$], [\textbf{C}$_3(r_G)$].  In the case of an alternative location of the GMPP, the asymptotics of  $\Psi_{{\rm sim}}(s\bv{g})$ can be obtained from the main result of \cite{mainpaper} (when GMPP is on the face of~$G$) or arguing as indicated in Remark~\ref{re2.2} (in all other cases).
}
\end{remark}

\begin{remark}
{\rm
Our result could also be extended to the case of a more general set~$G$, with the property that the GMPP for hitting that set by our RW is at a ``vertex" on~$\partial G$.  Here is a possible set (i)--(iv) of conditions for such an extension.

(i)~$G\cap  \{ t \mathbb{E} \bxi: t\ge 0\}=\varnothing.$ (ii)~There is a  $\bv{g}\in\partial G$ which is the unique GMPP of~$G$. (iii)~There exist $\varepsilon,\delta >0$ such that
$
D(\bv{g})\equiv  D (G) < D(G\setminus U_{\varepsilon} (\bv{g}))   - \delta .
$
(iv)~Denote by
\[
C_\theta (\bv{b}):=\Bigl\{\bv{v}\in \mathbb{R}^d:\arccos
\frac{\langle\bv{v}, \bv{b}\rangle}{\|\bv{v}\|\| \bv{b} \|} \le \theta\Bigr\}, \quad \theta \in (0,\pi/2),
\]
a circular cone  in $\mathbb{R}^d$ with the axis direction vector $\bv{b} ,$ opening angle  $2\theta$  and apex at~$\bv{0}$, and by $\bv{\zeta} $ the unit normal vector to the   level surface of $\Lambda$  passing through  the GMPP (see~\eqref{unitnormal}). Then there exist a $\bv{b} \in \mathbb{R}^d$ and values  $0<\theta_1 <\theta_2 <\pi/2$ such that
\[
 C_{\theta_1} (\bv{b})   \cap U_{\varepsilon} (\bv{0})
\subset
(G-g)\cap U_{\varepsilon} (\bv{0})
\subset
C_{\theta_2} (\bv{\zeta})\cap U_{\varepsilon} (\bv{0}) .
\]
Condition (iv) ensures that $\partial G$ is non-smooth at the GMPP $\bv{g}$, where it has a ``vertex'' with a positive solid angle at it. It is not very  hard to verify, using basically the same argument as the one in the proof of our Theorem~\ref{finalthm} (but with a number of appropriate changes) that $\mathbb{P} ( \eta(sG) < \infty )$ for such a   $G$ will also have asymptotics of the form~\eqref{lastassertion}.}
\end{remark}

\begin{remark}
{\rm Moreover, one can further extend the setup of our large deviation problem considering, instead of just ``inflated sets" $sG,$ other versions of ``remote sets". Such possible versions include  shifts $s\bv{g}+V$ for some fixed set $V\subset \mathbb{R}^d$ (which coincides with the inflated set $s(\bv{g}+V)$ in our special case when $V={\rm cl}\, (Q^+),$ but would be different from that set when $V$ is not a cone), combinations of shift and inflation which may, say, be of the form $ s\bv{g}+s^v V$ for some $v>0,$ and so on.   It appears that our approach would  also work for some of those other   settings, but the answers may be different in their form from both~\eqref{lastassertion} and~\eqref{smoothlastassertion}.}
\end{remark}

\section{Proofs}
\label{proofs_section}

For the reader's convenience, we start this section with a short list of notations that either have already been introduced or will   appear below and that are often used in the proofs. Next to the notations are the numbers of the displayed formulae with the resp.\ definitions:
\begin{itemize}
\item auxiliary hyperplanes and half-spaces:
$H(r)  = \bv{g} + H_0(r), $ $\widehat{H}(r) = \bv{g} + \widehat{H}_0(r)$
\eqref{Hr};

\item most probable points:
$\bal[B] = \argminA_{\sbv{v} \in B}\Lambda(\bv{v})$
 \eqref{MPP}; $\bal(r)  = \bal[rG]$ \eqref{balrg};  $\bbe(r) =\bal[r\widehat{H}(r_G)]$
\eqref{bber}; $\bv{\chi}  = n\bbe(s/n) - s\bv{g}$
\eqref{chi};

\item the second rate function and related objects:
 $D_u(\bv{v})  = u \Lambda  (  {\bv{v}}/{u} ),$  $D_u(B) = \inf_{\sbv{v} \in B} D_u(\bv{v})$ \eqref{doubleinf1}; $D(\bv{v})  = \inf_{u > 0}D_u(\bv{v})$ \eqref{Dv}; $
 D(B)  = \inf_{u > 0}D_u(B)$ \eqref{doubleinf};

\item most probable times: $u_B  = \argminA_{u > 0} D_u(B)$
\eqref{mpt}; $r_B  = 1/u_B$ \eqref{rB};

\item normals to the level surfaces of $\Lambda$: $ \bv{N}(r) =  \bla(\bal(r))$
\eqref{normaln}; $\bv{\zeta} = \frac{\boldsymbol{N }(r_G)}{\|\boldsymbol{N}(r_G) \|}$
\eqref{unitnormal};

\item the first hitting time  of the auxiliary half-space:
$\eta_s = \eta(s\widehat{H}(r_G) )$ (cf.~\eqref{PPP}; this formula also introduces probabilities
$P_j,$ $ j=1,2,3$).  
\end{itemize}
Finally, by  $c$ (with or without subscripts) we denote in this section  positive constants (possibly different within one and the same argument and depending on $F$ and $\bv{g}$).

In all the assertions below except Lemma~\ref{halfspacempp1} we always assume that conditions  [\textbf{C}$_1$], [\textbf{C}$_2$]    and [\textbf{C}$_3(r_G)$], where $r_G$ is defined by~\eqref{rB} with $B=G$,  are met.

The scheme of the proof of our main result was outlined in the second last paragraph of the Introduction.   At the first step, we will prove Lemma~\ref{uniquemmp}.

\begin{proof}[Proof of Lemma~\ref{uniquemmp}]
Suppose there is another MPP $\bal_1 \neq \bal_0$ for the set $rG$. Denote by $l$ the straight line segment with the end points $\bal_0$ and $\bal_1$. Since both $rG$ and the sublevel set $\widetilde{L}:= \{\bv{v} \in \mathbb{R}^d: \Lambda(\bv{v}) \leq \Lambda(\bal_0) \}$ are convex and ${\rm int}(\widetilde{L}) \cap (rG) = \varnothing$ (as $\Lambda(\bv{v}) < \Lambda(\bal_0)$ for any $\bv{v} \in {\rm int}(\widetilde{L})$), one must have $l \subset (rG) \cap \widetilde{L} = (rG) \cap \partial \widetilde{L}$. The latter relation implies that
\begin{equation}\label{star}
\Lambda(\bal) = \Lambda(\bal_0), \quad \bal \in l.
\end{equation} As $\bal_0$ belongs to the open set $\Omega_{\Lambda}$, there exists an  $\varepsilon \in (0, \|\bal_0 - \bal_1 \|)$ such that $U_{\epsilon}(\bal_0) \subset \Omega_{\Lambda}$, so that $\Lambda$ is strictly convex on $U_{\epsilon}(\bal_0).$ In particular, it is strictly convex on the segment $l \cap U_{\epsilon}(\bal_0)$, which contradicts to~\eqref{star}. Lemma \ref{uniquemmp} is proved.
\end{proof}

Next we  will construct  auxiliary half-spaces. Recall~\eqref{normaln} and let
\begin{equation*}\label{H0}
 H_0(r) := \{\bv{v} \in \mathbb{R}^d: \lag \bv{v}, \bv{N}(r) \rag = 0 \}, \quad  \widehat{H}_0(r) := \{\bv{v} \in \mathbb{R}^d: \lag \bv{v}, \bv{N}(r) \rag \geq 0 \}
\end{equation*} be the linear subspace orthogonal to $\bv{N}(r)$ and the ``upper'' half-space bounded by $H_0(r)$, respectively.
Their respective translations by the vector $\bv{g}$ we will denote by
\begin{equation}
\label{Hr}
H(r) := \bv{g} + H_0(r) \quad {\rm and } \quad \widehat{H}(r) := \bv{g} + \widehat{H}_0(r).
\end{equation}  Under condition [\textbf{C}$_3(r)$], one has
\begin{equation}
\label{31r}
rH(r) = r\bv{g} + rH_0(r) =  \bal(r) + H_0(r) \quad {\rm and} \quad r\widehat{H}(r) = \bal(r) + \widehat{H}_0(r).
\end{equation}

\begin{figure}[ht]	
	\centering
	\includegraphics[scale=0.45]{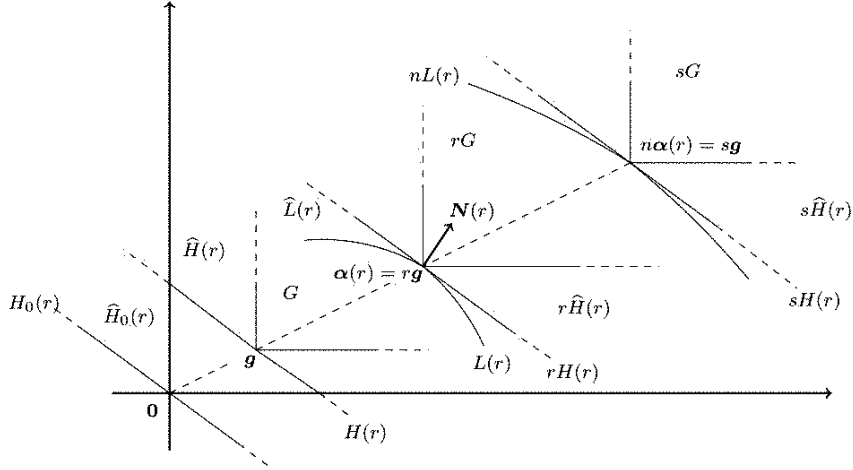}
	\vspace{-5mm}
	\caption{Auxiliary constructions: the level line $L(r)$ of $\Lambda,$ its scaled version $nL(r),$ the resp.\ tangent straight lines $rH(r)$ and $sH(r)$, and other related objects (case $d = 2, r = s/n$).}
	\label{figure:1}
\end{figure}

Since $\bal(r) \in L(r)$ by \eqref{type-one-ls} and $\bal(r) = r\bv{g}$ from condition  [\textbf{C}$_3(r)$],    one has $n\bal(r) = nr\bv{g} = s\bv{g} \in n L(r)$ (see Fig.\,1), when we choose
\begin{equation}
\label{31x}
r := \frac{s}{n},
\end{equation} where $s > 0$ is the parameter used to scale the set $G$ and $n \in \mathbb{N}$ will have the interpretation of the number of steps in the RW $\bv{S}$ (see \eqref{sn}).

Hence the sets
\begin{equation}
\label{31a}
 sH(r) = s\bv{g} + H_0(r) \quad {\rm and} \quad s\widehat{H}(r) = s\bv{g} + \widehat{H}_0(r),
\end{equation}
are, respectively, the tangent hyperplane to the scaled surface $nL(r)$ at the point $s\bv{g}$ and   the ``upper'' half-space bounded by $sH(r)$.

The role of the half-space $r\widehat{H}(r)$ is clarified in the next lemma,  which shows that the MPP for   $r\widehat{H}(r)$ coincides with the MPP for the scaled version $rG$ of the set~$G$.

\begin{lemma}\label{halfspacempp1}
If conditions  {\rm [\textbf{C}$_1$]}, {\rm [\textbf{C}$_2$]}  and {\rm [\textbf{C}$_3(r)$]} are   satisfied for an $r > 0$, then
\begin{equation*}\label{va}
\bal\big[r\widehat{H}(r)\big] = \bal(r) = r\bv{g}.
\end{equation*}
\end{lemma}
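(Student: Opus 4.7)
The plan is to show directly from the convexity of $\Lambda$ combined with the identification of $\bv{N}(r)$ as the gradient $\Lambda'$ at $\bal(r)$ that $\bal(r)$ minimizes $\Lambda$ over the half-space $r\widehat{H}(r)$, and then invoke Lemma~\ref{uniquemmp} for uniqueness.

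First I would observe that, under [\textbf{C}$_3(r)$], the identification $\bal(r) = r\bv{g}$ holds (as noted in the paragraph preceding the statement of the lemma), that $\bal(r) \in \Omega_\Lambda$, and that
\[
\Lambda'(\bal(r)) = \bla(\bal(r)) = \bv{N}(r)
\]
by property~\eqref{bla(bal)} together with the definition~\eqref{normaln}. Since $\Omega_\Lambda$ is open and $\Lambda$ is analytic (in particular differentiable) there, the classical first-order subgradient inequality for the convex function $\Lambda$ gives
\[
\Lambda(\bv{v}) \;\ge\; \Lambda(\bal(r)) + \langle \bv{N}(r),\, \bv{v} - \bal(r)\rangle, \qquad \bv{v} \in \mathbb{R}^d.
\]

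Next, from the representation $r\widehat{H}(r) = \bal(r) + \widehat{H}_0(r)$ in~\eqref{31r}, every point $\bv{v} \in r\widehat{H}(r)$ satisfies $\bv{v} - \bal(r) \in \widehat{H}_0(r)$, i.e.\ $\langle \bv{N}(r), \bv{v} - \bal(r)\rangle \ge 0$. Combining this with the subgradient inequality yields $\Lambda(\bv{v}) \ge \Lambda(\bal(r))$ for all $\bv{v} \in r\widehat{H}(r)$. Since $\bal(r)$ itself lies in $r\widehat{H}(r)$ (the point $\bv{0}$ belongs to $\widehat{H}_0(r)$), this proves that $\bal(r)$ is an MPP of $r\widehat{H}(r)$.

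Finally, uniqueness of this MPP follows from Lemma~\ref{uniquemmp} applied to the convex set $r\widehat{H}(r)$ in place of $rG$: indeed, $\bal(r) = r\bv{g} \in \Omega_\Lambda$ by [\textbf{C}$_3(r)$], so the hypothesis of Lemma~\ref{uniquemmp} is met. The main (and only mildly subtle) point in the argument is recognizing that the normal vector $\bv{N}(r)$ defining the half-space is precisely the gradient of $\Lambda$ at the candidate point, which turns the geometric statement of the lemma into an immediate consequence of convexity; no further computation is required.
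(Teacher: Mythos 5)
Your proof is correct and takes essentially the same route as the paper: the paper argues geometrically that $rH(r)$ is a supporting hyperplane of the convex sublevel set of $\Lambda$ at $r\bv{g}$ and that ${\rm int}(r\widehat{H}(r))$ lies on its far side, while you express the equivalent fact as the first-order subgradient inequality for the convex function $\Lambda$ with gradient $\bv{N}(r)=\Lambda'(\bal(r))$. Both arguments then invoke (the argument of) Lemma~\ref{uniquemmp} for uniqueness, which is indeed applicable here since its proof uses only the convexity of the target set, and $r\widehat{H}(r)$ is convex.
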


\begin{proof}[Proof of Lemma \ref{halfspacempp1}]
In view of  {\rm [\textbf{C}$_3(r)$]}, we only have to show that $\bal[r\widehat{H}(r)] = r\bv{g}$. Since $rH(r)$ is the tangent hyperplane to $L(r)$ at the point $r\bv{g},$   arguing as in the proof of Lemma \ref{uniquemmp}, we see that the sublevel set ${\rm cl}(\widehat{L}(r)^c)$ has a unique contact point $r\bv{g}$ with $rH(r)$.  It is clear that ${\rm int}(r\widehat{H}(r))$ is separated from  ${\rm cl}(\widehat{L}(r)^c)$ by the hyperplane $rH(r)$. Hence  $\Lambda(\bv{v}) > \Lambda(r\bv{g}),$   $\bv{v} \in {\rm int}(r\widehat{H}(r))$, which completes the proof of Lemma~\ref{halfspacempp1}.
\end{proof}

The   properties of the    half-space $\widehat{H}(r_G)$    stated in the next lemma will    play a key role in our argument.  It turns out that the crude asymptotics  of $\mathbb{P}\big(\eta( s\widehat{H}(r_G))<\infty\big)$ as $s \to \infty$ are the same as those  for  $\mathbb{P}\big(\eta( sG)<\infty\big)$  and, moreover, the MPTs and MPPs for the sets $G$ and $\widehat{H}(r_G)$ (and hence the GMPPs for them) are the same as well.

\begin{lemma}\label{DG}
Suppose the condition  {\rm [\textbf{C}$_3(r_G)$]} is met. Then
\begin{equation}\label{pair}
u_G = u_{\widehat{H}(r_G)} \quad {\it and} \quad \bal (r_G ) = \bal \big[ r_{\widehat{H}(r_G)}\widehat{H}(r_G)\big ] = r_G\bv{g},
\end{equation} so that $D(G) = D(\widehat{H}(r_G)),$  and $\big(u_G, u_G\bal (r_G )\big) = \big(u_{\widehat{H}(r_G)}, u_{\widehat{H}(r_G)}\bal \big[r_{\widehat{H}(r_G)} \widehat{H}(r_G) \big]\big )$ is the unique point at which the infimum on the RHS of \eqref{doubleinf} is attained for both $B = G$ and $B=\widehat{H}(r_G)$.

\end{lemma}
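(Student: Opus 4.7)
The central observation is the inclusion $G \subset \widehat{H}(r_G)$. Indeed, the first part of~[\textbf{C}$_3(r_G)$] asserts that $\bv{N}(r_G) \in Q^+$, and for any $\bv{v}\in G$ one has $\bv{v}-\bv{g}\in\mathrm{cl}(Q^+)$, so $\langle\bv{v}-\bv{g},\bv{N}(r_G)\rangle\ge 0$, i.e.\ $\bv{v}\in\widehat{H}(r_G)$. Consequently, for every $u>0$, $D_u(\widehat{H}(r_G))\le D_u(G)$, and thus $D(\widehat{H}(r_G))\le D(G)$.

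The plan is to show the reverse inequality by exhibiting a common value of $D_u$ for the two sets at a specific time $u=1/r_G$. By the definition of $u_G$ and $r_G$, together with~[\textbf{C}$_3(r_G)$] and Lemma~\ref{uniquemmp}, the (unique) MPP of $r_G G$ is $\bal(r_G)=r_G\bv{g}$, so that
\[
D(G)=D_{u_G}(G)=\tfrac{1}{r_G}\Lambda(r_G\bv{g}).
\]
On the other hand, the scaled half-space $r_G\widehat{H}(r_G)=r_G\bv{g}+\widehat{H}_0(r_G)=\bal(r_G)+\widehat{H}_0(r_G)$ has, by Lemma~\ref{halfspacempp1} applied at $r=r_G$, its MPP again equal to $r_G\bv{g}$; therefore
\[
D_{1/r_G}(\widehat{H}(r_G)) = \tfrac{1}{r_G}\Lambda(r_G\bv{g}) = D(G).
\]
Combining this with the inclusion inequality yields $D(\widehat{H}(r_G))\le D_{1/r_G}(\widehat{H}(r_G)) = D(G) \le D(\widehat{H}(r_G))$, so equality holds throughout, $D(G)=D(\widehat{H}(r_G))$, and moreover the infimum in $u$ for $\widehat{H}(r_G)$ is attained at $u=1/r_G$. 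This identifies $u_{\widehat{H}(r_G)}=1/r_G=u_G$ and consequently $r_{\widehat{H}(r_G)}=r_G$, which immediately gives the MPP identity $\bal[r_{\widehat{H}(r_G)}\widehat{H}(r_G)]=\bal[r_G\widehat{H}(r_G)]=r_G\bv{g}=\bal(r_G)$ from Lemma~\ref{halfspacempp1}.

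The step I expect to require the most care is the uniqueness assertion for the joint minimizer $(u,\bv{v})$ over $\mathbb{R}^+\times B$. The existence of a minimizer at $(u^{*},\bv{v}^{*})=(1/r_G,\bv{g})$ follows by substituting $\bv{v}=u_G\bal(r_G)=\bv{g}$ into the preceding identity. For uniqueness, I would proceed in two stages. First, for each fixed $u>0$, the restriction $\bv{v}\mapsto D_u(\bv{v})$ on either $G$ or $\widehat{H}(r_G)$ attains its minimum at a unique point by Lemma~\ref{uniquemmp}, since $r_G\bv{g}\in\Omega_\Lambda$ by~[\textbf{C}$_3(r_G)$]. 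Second, for the uniqueness of $u$, I would invoke the joint convexity~[\textbf{D}$_4$] of $D_u(\bv{v})$ together with property~[\textbf{D}$_2$]: if $(u',\bv{v}')$ were another minimizer over $\mathbb{R}^+\times \widehat{H}(r_G)$, convexity would force the entire segment joining it to $(1/r_G,\bv{g})$ to consist of minimizers; but along that segment the ratio $\bv{v}/u$ traces a smooth path passing through $r_G\bv{g}\in\Omega_\Lambda$, and~[\textbf{D}$_2$] combined with the strict convexity of $\Lambda$ on $\Omega_\Lambda$ forbids this. The same argument applies to $B=G$ because the joint minimizer lies in $G\subset\widehat{H}(r_G)$, so the uniqueness on the larger set transfers to the smaller one.
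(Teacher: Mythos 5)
Your argument establishes $D(\widehat{H}(r_G))\le D(G)$ from the inclusion $G\subset\widehat{H}(r_G)$, and correctly computes $D_{1/r_G}(\widehat{H}(r_G))=\tfrac{1}{r_G}\Lambda(r_G\bv{g})=D(G)$ using Lemma~\ref{halfspacempp1}. But the final chain
\[
D(\widehat{H}(r_G))\le D_{1/r_G}(\widehat{H}(r_G)) = D(G) \le D(\widehat{H}(r_G))
\]
contains an unjustified step: the last inequality $D(G)\le D(\widehat{H}(r_G))$ is attributed to ``the inclusion inequality,'' but the inclusion $G\subset\widehat{H}(r_G)$ gives the \emph{opposite} direction. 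Taken together, your two correct observations only say $D(\widehat{H}(r_G))\le D(G)$, twice. Nothing you write rules out that the infimum over $u$ for the larger set $\widehat{H}(r_G)$ is strictly smaller, attained at some $u\ne 1/r_G$ where the MPP of $(1/u)\widehat{H}(r_G)$ drifts off the ray through $\bv{g}$. So the equality $D(G)=D(\widehat{H}(r_G))$, which is the substance of the lemma, is not established.

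The missing ingredient is a first-order/convexity argument at $\bv{g}$, which is exactly what the paper's proof supplies: by~[\textbf{D}$_3$] and~[\textbf{C}$_3(r_G)$], $D'(\bv{g})=\bv{N}(r_G)$, which is the inner normal to $\widehat{H}(r_G)$ at $\bv{g}$; since $D$ is convex ([\textbf{D}$_1$]), the sublevel set $\{D\le D(\bv{g})\}$ is convex and, by strict convexity of $D$ near $\bv{g}$, is tangent to $H(r_G)$ only at $\bv{g}$, forcing $D(\bv{v})>D(\bv{g})$ for all $\bv{v}\in\widehat{H}(r_G)\setminus\{\bv{g}\}$ and hence $D(\widehat{H}(r_G))=D(\bv{g})=D(G)$. (Equivalently: the gradient of the jointly convex $D_u(\bv{v})$ at $(u_G,\bv{g})$ is $(0,\bv{N}(r_G))$, an inner normal to $\mathbb{R}^+\times\widehat{H}(r_G)$, so $(u_G,\bv{g})$ is a global minimizer there.) Your uniqueness paragraph, which invokes~[\textbf{D}$_4$], [\textbf{D}$_2$] and strict convexity of $\Lambda$ along a segment of putative minimizers, is a plausible sketch but presupposes the very fact left unproved, namely that $(1/r_G,\bv{g})$ is a minimizer for $B=\widehat{H}(r_G)$. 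You should first close the existence gap via the normal-cone argument above, and then the uniqueness discussion can be carried out as you indicate.
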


\begin{proof}[Proof of Lemma \ref{DG}]
First we   show that $(u_G, \bv{g})$ is the unique time-space point where the infimum on the RHS of \eqref{doubleinf} is attained when $B = G$. From [$\textbf{D}_3$] and {\rm [\textbf{C}$_3(r_G)$]},
\begin{equation}
\label{proofthm2.1-1}
D'(\bv{v})\big|_{\sbv{v} = \sbv{g}} = \Lambda' (\bal)\big|_{\sbal = r_G\sbv{g}} \equiv \bv{N}(r_G) \in Q^+.
\end{equation}
As the sublevel set $\widetilde{L}_1 := \{\bv{v} \in \mathbb{R}^d: D(\bv{v}) \leq D(\bv{g}) \}$ whose   boundary passes through   $\bv{g}$ is convex due to  [$\textbf{D}_1$], relation \eqref{proofthm2.1-1} means that $\widetilde{L}_1 \cap G=\{\bv{g}\}$. Therefore $D(G) = D(\bv{g})$ and $\bv{g}$ is the only point $\bv{v} \in G$ such that $D(G) = D(\bv{v})$.

By  [$\textbf{D}_2$], there is a unique point $t(\bv{g}) > 0$ such that $D(\bv{g}) = \Lambda(t(\bv{g})\bv{g})/t(\bv{g})$. Hence $(u_G = 1/t(\bv{g}), \bv{g})$ is the unique point at which the infimum on the RHS of \eqref{doubleinf} with $B = G$ is attained.

Now note that, in view of \eqref{proofthm2.1-1}, $H(r_G)$ is the tangent hyperplane to the level surface $\partial \widetilde{L}_1$ at the point $\bv{g}$. Arguing as in the proof of Lemma \ref{uniquemmp} and using the strict convexity of $D$ in a neighborhood of $\bv{g}$, which can be seen from [$\textbf{D}_3$] under condition {\rm [\textbf{C}$_3(r_G)$]}, we obtain that   $\widetilde{L}_1 \cap H(r_G)=\{\bv{g}\}$. Repeating (with obvious changes, replacing $G$ with $\widehat{H}(r_G)$) the argument in the first part of this proof, we see that $(1/t(\bv{g}), \bv{g})$ is also the unique point at which the RHS of \eqref{doubleinf} with $B = \widehat{H}(r_G)$ attains its minimum, so that $u_{\widehat{H}(r_G)} = 1/t(\bv{g}) = u_G$ and
\begin{equation*}
\bal \big[r_{\widehat{H}(r_G)}\widehat{H}(r_G)\big] = r_{\widehat{H}(r_G)}\bv{g} = r_G\bv{g} = \bal[r_GG] \equiv \bal(r_G) = r_G\bv{g}.
 \end{equation*} Lemma~\ref{DG} is proved.
\end{proof}

To prove the main Theorem~\ref{finalthm}, we will need a few further ancillary results. Recall   notations \eqref{MPPdef}, \eqref{31r}, \eqref{31x}  and $r:=s/n,$  and denote by
\begin{equation}
\label{bber}
 \bbe(r):=\bal[r\widehat{H}(r_G)]
\end{equation}
 the MPP of the set $r\widehat{H}(r_G)$. By Lemma~\ref{DG},
\begin{equation*}
\label{beta=alpha}
\bbe (r_G) =\bal (r_G)=r_G\bv{g}.
\end{equation*}

Denote by $\mathscr{G}$ the class of functions $\gamma : \mathbb{R}^+ \rightarrow \mathbb{R}^+$ such that $\gamma(s) = o(s)$ as $s \rightarrow \infty$. The next lemma describes the ``movement" of the MPPs for the half-spaces $r\widehat{H}(r_G) \equiv (s/n) \widehat{H}(r_G)$ for $n$-values in the $\gamma(s)$-neighborhood of~$s/r_G$.

 \begin{lemma}
 \label{nbconv}
 Let $\gamma \in \mathscr{G}$.   There exists  a constant  vector $\bv{\kappa}\in \mathbb{R}^d$ such that, as $s\to\infty$,  for $|n - s/r_G | \leq \gamma(s)$ one has
 \begin{equation*}
    n\bbe(s/n) - s\bv{g}  = (n - s/r_G) \bv{\kappa} + O(s^{-1}\gamma^2(s)).
 \end{equation*}
 \end{lemma}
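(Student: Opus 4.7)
The plan is to show that $r\mapsto\bbe(r)$ is analytic in a neighborhood of $r_G$, compute its first-order Taylor expansion there, and then translate the expansion back via $r=s/n$ while carefully bookkeeping the remainder. I would begin by pinning down $\bbe(r)$ concretely. Since $\bbe(r)$ is the MPP of the half-space $r\widehat{H}(r_G)=r\bv{g}+\widehat{H}_0(r_G)$, and for $r$ close to $r_G$ one has $\mathbb{E}\bxi\notin r\widehat{H}(r_G)$ (by the last part of {\rm [\textbf{C}$_3(r_G)$]}), relation~\eqref{aldB} forces $\bbe(r)$ onto the boundary hyperplane $r\bv{g}+H_0(r_G)$. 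Lagrange stationarity for the minimization of $\Lambda$ on that hyperplane gives $\Lambda'(\bbe(r))=\mu(r)\bv{N}(r_G)$ for some scalar $\mu(r)$, with $\mu(r_G)=1$ in view of $\bbe(r_G)=r_G\bv{g}$ and~\eqref{normaln}. Because $\Lambda$ is analytic and strictly convex on $\Omega_\Lambda\ni r_G\bv{g}$, the map $\Lambda'$ is a local analytic diffeomorphism near $r_G\bv{g}$, so one can write
$$
\bbe(r)=(\Lambda')^{-1}\!\bigl(\mu(r)\bv{N}(r_G)\bigr).
$$

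Next, I would pin down $\mu(r)$ from the hyperplane constraint $\langle\bbe(r)-r\bv{g},\bv{N}(r_G)\rangle=0$, which reads
$$
\Phi(\mu,r):=\bigl\langle(\Lambda')^{-1}(\mu\bv{N}(r_G)),\,\bv{N}(r_G)\bigr\rangle-r\,\langle\bv{g},\bv{N}(r_G)\rangle=0.
$$
At $(\mu,r)=(1,r_G)$ one has $\Phi=0$ and $\partial_\mu\Phi=\bigl\langle[\Lambda''(r_G\bv{g})]^{-1}\bv{N}(r_G),\bv{N}(r_G)\bigr\rangle>0$ by positive definiteness of $\Lambda''$ on $\Omega_\Lambda$. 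The implicit function theorem then yields an analytic $\mu(r)$ and, by composition, an analytic $\bbe(r)$ with
$$
\bbe'(r_G)=\mu'(r_G)\,[\Lambda''(r_G\bv{g})]^{-1}\bv{N}(r_G),\qquad \mu'(r_G)=\frac{\langle\bv{g},\bv{N}(r_G)\rangle}{\bigl\langle[\Lambda''(r_G\bv{g})]^{-1}\bv{N}(r_G),\bv{N}(r_G)\bigr\rangle},
$$
and hence the Taylor expansion $\bbe(r)=r_G\bv{g}+(r-r_G)\bbe'(r_G)+O\bigl((r-r_G)^2\bigr)$ as $r\to r_G$.

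Finally, I would insert $r=s/n$ with $\delta:=n-s/r_G$, $|\delta|\leq\gamma(s)=o(s)$, and use the exact identities $nr_G-s=r_G\delta$ and $n(r-r_G)=s-nr_G=-r_G\delta$, together with $n(r-r_G)^2=r_G^2\delta^2/n=O(s^{-1}\gamma^2(s))$ (valid since $n\asymp s$ in the allowed range). Multiplying the Taylor expansion by $n$ then gives
$$
n\bbe(s/n)-s\bv{g}=(nr_G-s)\bv{g}+n(r-r_G)\bbe'(r_G)+O\bigl(n(r-r_G)^2\bigr)=r_G\delta\bigl(\bv{g}-\bbe'(r_G)\bigr)+O\bigl(s^{-1}\gamma^2(s)\bigr),
$$
so the lemma follows with the constant vector $\bv{\kappa}:=r_G\bigl(\bv{g}-\bbe'(r_G)\bigr)$. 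The only genuinely non-routine step is verifying the hypotheses of the implicit function theorem, which reduces to the strict positive definiteness of $\Lambda''(r_G\bv{g})$ --- automatic from the strict convexity and analyticity of $\Lambda$ on $\Omega_\Lambda$; everything else is a careful Taylor-expansion bookkeeping.
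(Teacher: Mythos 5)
Your proof is correct, and the final bookkeeping (passing from $r=s/n$ and $\delta=n-s/r_G$ to the stated expansion, with the observation $n(r-r_G)^2=r_G^2\delta^2/n=O(s^{-1}\gamma^2(s))$) matches the paper exactly. The technical route to the Taylor expansion of $\bbe(r)$, however, is genuinely different. The paper parametrizes $\bbe(r)=r_G\bv{g}+\varepsilon\bv{\zeta}+\bv{h}J$ in position coordinates (with $J$ spanning $H_0(r_G)$), Taylor-expands $\bla$ about $r_G\bv{g}$, and solves the $(d-1)$-dimensional linear system coming from the orthogonality condition $\bla(\bbe(r))J^T=\bv{0}$ to get $\bv{h}=-\varepsilon\bv{\zeta}\Lambda''J^T(J\Lambda''J^T)^{-1}+O(\varepsilon^2)$. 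You instead encode the stationarity via a scalar Lagrange multiplier $\mu(r)$, parametrize $\bbe(r)=(\Lambda')^{-1}(\mu(r)\bv{N}(r_G))$ through the inverse gradient map, and determine $\mu(r)$ by a one-dimensional implicit function theorem applied to the hyperplane constraint. This trades a $(d-1)\times(d-1)$ matrix inversion for a single scalar division, and gives analyticity of $r\mapsto\bbe(r)$ essentially for free, which is a clean by-product the paper does not need to state explicitly. A short check (e.g.\ $d=2$, $\bv{\zeta}=(1,0)$, $\bv{e}=(0,1)$, $A=\Lambda''(r_G\bv{g})=\bigl(\begin{smallmatrix}a&b\\ b&c\end{smallmatrix}\bigr)$) confirms the two closed forms for $\bbe'(r_G)$, and hence for $\bv{\kappa}=r_G(\bv{g}-\bbe'(r_G))$, agree with the paper's expression $\bv{\kappa}=r_G\bigl[\bigl(\bv{\zeta}AJ^T(JAJ^T)^{-1}J-\bv{\zeta}\bigr)\lag\bv{g},\bv{\zeta}\rag+\bv{g}\bigr]$.
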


\begin{proof}
Observe that
 \begin{equation}
 \label{chi}
\bv{\chi} := n\bbe(s/n) - s\bv{g}
 = n(\bbe(r) - r\bv{g})
= n[(\bbe(r) - \bbe(r_G)) + (r_G - r)\bv{g}].
\end{equation}
To evaluate the first term on the RHS, first recall that $\bbe(r) \in rH(r_G)$ according to~\eqref{aldB} and introduce the unit normal vector to $H(r_G)$ (cf.\ \eqref{normaln}):
\begin{equation}
\label{unitnormal}
\bv{\zeta} := \frac{\bv{N }(r_G)}{\|\bv{N}(r_G) \|}.
\end{equation}
Next  note that $rH(r_G) = r_GH(r_G) + \varepsilon\bv{\zeta}$, where
\begin{equation}
\label{varep}
\varepsilon := (r- r_G)\lag \bv{g}, \bv{\zeta} \rag =o(1),\qquad  s\to\infty,
\end{equation}
under the conditions of the lemma.
Choose an orthonormal system $\bv{e}_1, \ldots, \bv{e}_{d-1}$ of  vectors orthogonal to $\bv{\zeta}$ and let $J$ be the $ (d-1)\times d$--matrix having  these vectors as its rows. As  $\bbe (r) \in rH(r_G),$ this vector is of the form
\[
\bv{\beta}(r)=r_G\bv{g} + \varepsilon\bv{\zeta} +\sum_{i=1}^{d-1}h_i\bv{e}_i
 = r_G\bv{g} + \varepsilon\bv{\zeta} + \bv{h}J,\quad \bv{h}:=(h_1,\ldots, h_d)\in \mathbb{R}^{d-1}.
\]
As $\bbe (r)$ is the MPP for $r\widehat{H}(r_G)$, it  is the unique point of that form which is orthogonal to $H(r_G)$  or, which is the same, orthogonal to all $\bv{e}_j, $ $j=1,\ldots, d-1:$
\begin{equation}\label{orthg}
 \bla (r_G\bv{g} + \varepsilon\bv{\zeta} + \bv{h}J) J^T=\bv{0}.
\end{equation}
Next, assuming that $\|\bv{h}\| = o(1)$, we use condition {\rm [\textbf{C}$_3(r_G)$]},   the multivariate Taylor's formula and~\eqref{bla(bal)} to write
\begin{equation*}
\bla(r_G\bv{g} +\varepsilon \bv{\zeta} + \bv{h}J ) = \bla(r_G\bv{g}) + (\varepsilon \bv{\zeta} + \bv{h}J)\Lambda''(r_G\bv{g}) + O(\varepsilon^2  +\|\bv{h}\|^2).
\end{equation*}
Substituting this into~\eqref{orthg}, noting that $\bla(r_G\bv{g})J^T=\bv{0}$ and  setting $A:=\Lambda'' (r_G \bv{g})$  for brevity, we get
\[
 ( \varepsilon\bv{\zeta} + \bv{h}J) A J^T+O(\varepsilon^2 +\|\bv{h}\|^2)=\bv{0}.
\]
The remainder term here is a continuous function of $\bv{h}$, whereas $J AJ^T$ is a positive-definite matrix since $A$ is. So we conclude that there exists a (unique, as we already know) solution  to the above equation equal to $\bv{h}= -\varepsilon\bv{\zeta} A J^T (J  A J^T)^{-1} + O(\varepsilon^2  ).$ Hence
\begin{equation}
\label{beta_dist}
\bbe(r) - \bbe(r_G)\equiv
\bbe(r) - r_G\bv{g}
=
\varepsilon \big( \bv{\zeta} - \bv{\zeta} A J^T (J A J^T)^{-1} J \big) + O(\varepsilon^2).
\end{equation}
It follows from~\eqref{chi}, \eqref{varep} and~\eqref{beta_dist} that
\begin{align}
  \bbe(r) -r\bv{g}   &=  \bbe(r) - \bbe(r_G)  + (r_G - r)  \bv{g}
  \notag
\\
&= \varepsilon \big( \bv{\zeta} - \bv{\zeta} A J^T (J  A J^T)^{-1} J \big) + O(\varepsilon^2)+ (r_G - r)  \bv{g}
\notag
 \\
&=
( r_G-r) \bv{  \kappa }/r_G +O\big((r_G-r) ^2\big),
\label{beta-rg}
\end{align}
where $
\bv{  \kappa }:= r_G\big[\big( \bv{\zeta} A J^T (J  A J^T)^{-1} J  -\bv{\zeta} \big) \lag\bv{g},\bv{\zeta}\rag  + \bv{g}\big] .$
As $n(r_G - r) = r_G(n- s/r_G)$ and $n( r_G-r)^2 = n^{-1}r_G^2(n-s/r_G)^2 = O(s^{-1}\gamma^2(s))$, the lemma is proved.
\end{proof}

For $\bal \in \Omega_{\Lambda}$, recall~\eqref{12a} and introduce the following two functions of $\bv{z} \in \mathbb{R}^d$:
 \begin{equation*}\label{piftn}
p(\bv{z}) := \mathbb{P}\big( \eta \big( {\rm cl}(Q^+) - \bv{z} \big) < \infty \big),
\end{equation*}
so that $p(\bv{z}) = 1$ for $\bv{z} \in {\rm cl}(Q^+),$ and
\begin{align}
 \label{qftn}
 q_{\sbal}(\bv{z}) &:= \mathbb{P}\Big(\inf_{n \geq 1} \lag \bla(\bal(r_G)), \bv{S}^{(\sbal)}(n)\rag \geq \lag \bla(\bal(r_G)), \bv{z} \rag \Big)
 \end{align}
(cf.\ pp.~253--254 in \cite{mainpaper}; in fact, $q_{\sbal}$ was defined there as an integral involving the RHS of~\eqref{qftn}, but on close inspection it is easy to see that it is actually the same as~\eqref{qftn}).  For a Borel subset $W \subset \widehat{H}_0(r_G)$, a    $\bv{w}  \in H_0(r_G)$ and $r > 0$ such that $\bbe(r) \in \Omega_{\Lambda}$, set
\begin{equation*}
 E(r, \bv{w}, W) := \int_{W} e^{-\lag \sbla(\sbbe(r)), \sbv{v} \rag} p(\bv{w} + \bv{v})q_{\sbbe(r)}(\bv{v})d\bv{v} < \infty,
\end{equation*}
the last inequality being a consequence of the bound~\eqref{new50} below for   $p$ and the fact that $\bv{\lambda}(\bbe(r)) \perp H_0(r_G).$
Finally, denote by $\mathcal{P}$   the orthogonal projection onto~$H_0(r_G).$

The next theorem is a key step in implementing our approach based on auxiliary half-spaces. If the RW $\bv{S}$ hits $sG$, then it   inevitably hits the ``best half-space approximation" $s\widehat{H}(r_G)\supset sG$ to it (in the sense that both sets have the same crude hitting probabilities asymptotics). In Theorem~\ref{mainthm1}, we compute the probability of hitting $sG$ ``localizing" in both time and space when and where the RW first hits~$s\widehat{H}(r_G)$.

\begin{theorem} \label{mainthm1}
Put $\bv{w} := n\bbe(r) -s\bv{g} + \bv{x}$.  There exists a sequence $\delta_n\to 0$  such that,  for  any fixed $\Delta_0 > 0$, $M_0 \in (0, \infty)$, and $\gamma \in \mathscr{G}$, one has, as $s \to \infty,$
 \begin{multline}
 \label{firstresult11}
 \mathbb{P}\big( \eta(sG) < \infty,  \eta \big(s\widehat{H}(r_G)\big) = n ,\bv{S}(n) \in n\bbe(r) + \bv{x} + \Delta[\bv{y}) \big)  \\
 =
  \frac{ \exp\{-n\Lambda(\bbe(r)) - \frac{1}{2n}\bv{x}\Lambda''(\bbe(r))\bv{x}^T + O(\|\bx \|^3n^{-2}) \}  }{(2\pi n)^{d/2} \sigma(\bbe(r))}
\\
\times \big[E(r, \bv{w}, \Delta[\bv{y}))(1+o(1))  + o\big(\Delta^d\exp\{-c_1\|\mathcal{P}(\bv{w}+\bv{y}) \| - c_2\lag \bv{\zeta}, \bv{y} \rag \} \big)  \big]
  \end{multline}
 uniformly in the range of the variables $n$, $\bv{x} \in H_0(r_G)$ and $\bv{y}$ specified by:
\begin{align*}
 &\big|n - s/r_G\big|  \leq \gamma (s),
 \quad \Delta \in [\delta_n , \Delta_0],
  \\
 &\|\bv{x} \| \leq \gamma(s), \quad \| \bv{y}\| < M_0, \quad \bx + \Delta[\bv{y}) \subset \widehat{H}_0(r_G).
\end{align*}
\end{theorem}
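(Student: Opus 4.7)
The plan is to derive~\eqref{firstresult11} by combining three standard ingredients of the Cram\'er-type large-deviation machinery: the strong Markov property at the hitting time $n$, an exponential change of measure (Cram\'er tilt) with parameter $\bla(\bbe(r))$, and a time-reversal identity that converts the boundary avoidance event into a one-sided crossing event for the tilted walk.

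First I would apply the strong Markov property at time $n$. Writing $\bv{v} := \bv{z} - n\bbe(r) - \bv{x}$ for the integration variable inside the cube $\Delta[\bv{y})$, stationarity of increments gives $\mathbb{P}(\eta(sG) < \infty \mid \bv{S}(n) = \bv{z}) = p(\bv{z} - s\bv{g}) = p(\bv{w} + \bv{v})$, so the problem reduces to the integro-local evaluation of
\[
\int_{\Delta[\sbv{y})} \mathbb{P}\bigl(\eta(s\widehat{H}(r_G)) = n,\, \bv{S}(n) \in n\bbe(r) + \bv{x} + d\bv{v}\bigr)\, p(\bv{w}+\bv{v}).
\]

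Next I would perform the Cram\'er tilt with parameter $\bla := \bla(\bbe(r))$, which by Lemma~\ref{halfspacempp1} is parallel to $\bv{N}(r_G)$ and hence orthogonal to $H_0(r_G)$. The tilt produces the prefactor $\exp\{-n\Lambda(\bbe(r)) - \lag \bla, \bv{z} - n\bbe(r)\rag\}$, and because $\lag \bla, \bv{x}\rag = 0$, the $\bv{v}$-dependent part of the exponent is precisely the $e^{-\lag \sbla, \sbv{v}\rag}$ appearing in the definition of $E$. The Stone-type integro-local limit theorem under the non-lattice assumption~[$\textbf{C}_1$] supplies, uniformly in the stated range, the tilted density at $n\bbe(r) + \bv{x}$ in the form $(2\pi n)^{-d/2}\sigma(\bbe(r))^{-1}\exp\{-\tfrac{1}{2n}\bv{x}\Lambda''(\bbe(r))\bv{x}^T + O(\|\bv{x}\|^3 n^{-2})\}$; the additional $\bv{v}$-contributions in the quadratic form are of order $\gamma(s)/s = o(1)$ and get absorbed into a $(1+o(1))$ factor. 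For the remaining conditional factor I would invoke the time-reversal identity $(\bv{S}(k))_{k \le n} \stackrel{d}{=} (\bv{S}(n) - \bv{S}(n-k))_{k \le n}$, under which the past-avoidance event becomes $\{\lag \widetilde{\bv{S}}(l), \bv{N}(r_G)\rag > \lag \bv{z} - s\bv{g}, \bv{N}(r_G)\rag,\, 1 \le l < n\}$ for the reversed walk. A crucial algebraic simplification now occurs: because $\bbe(r) \in r\bv{g} + H_0(r_G)$ and $r = s/n$ yield $\lag \bv{N}(r_G), n\bbe(r) - s\bv{g}\rag = 0$, combined with $\lag \bv{N}(r_G), \bv{x}\rag = 0$ this gives $\lag \bv{N}(r_G), \bv{w}\rag = 0$ and hence $\lag \bv{z} - s\bv{g}, \bv{N}(r_G)\rag = \lag \bv{v}, \bv{N}(r_G)\rag$. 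Conditioning on the typical endpoint $\bv{z} \approx n\bbe(r)$ becomes asymptotically trivial, and since the tilted walk's reversed increments share its law, the conditional probability converges to $q_{\bbe(r)}(\bv{v})$.

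Assembling these ingredients and integrating in $\bv{v}$ over $\Delta[\bv{y})$ produces precisely $E(r, \bv{w}, \Delta[\bv{y}))$ multiplied by the Gaussian-exponential prefactor in~\eqref{firstresult11}. The principal technical obstacle is securing uniformity across the full range $|n - s/r_G| \le \gamma(s)$, $\|\bv{x}\| \le \gamma(s)$, $\|\bv{y}\| < M_0$ and $\Delta \in [\delta_n, \Delta_0]$ with $\delta_n \to 0$: this requires a local limit theorem with an Edgeworth-type remainder valid down to cell size $\delta_n$, a quantitative version of the convergence of the past conditional law to the tilted law, and control of the deviation $\bbe(r) - r_G\bv{g}$ supplied by Lemma~\ref{nbconv}. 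The additive $o$-residual in~\eqref{firstresult11}, with its explicit decay in $\|\mathcal{P}(\bv{w}+\bv{y})\|$ and $\lag \bv{\zeta}, \bv{y}\rag$, arises from dominating the remainder in the time-reversal step by transverse ballistic concentration bounds (giving the factor $e^{-c_1\|\mathcal{P}(\sbv{w}+\sbv{y})\|}$) and ladder-height-type bounds (giving $e^{-c_2\lag \sbv{\zeta}, \sbv{y}\rag}$) for the tilted walk, which are exactly the decay rates exhibited by $p$ and $q_{\bbe(r)}$ themselves.
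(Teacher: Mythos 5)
Your high-level outline — strong Markov decomposition at time $n$, Cram\'er tilt at $\bla(\bbe(r))$, time reversal turning the past-avoidance event into a one-sided crossing for the reversed walk, and the algebraic observation $\lag \bv{N}(r_G),\bv{w}\rag = 0$ (correct, since $nr = s$ forces $n\bbe(r)-s\bv{g}\in H_0(r_G)$) — matches the paper's strategy, and your use of the integro-local and conditional-avoidance theorems from Borovkov--Mogulskii is the right invocation. However, there is a genuine gap in the step you summarize as ``assembling these ingredients and integrating in $\bv{v}$ over $\Delta[\bv{y})$.'' The measure $\mathbb{P}(\eta(s\widehat{H}(r_G))=n,\,\bv{S}(n)\in\bv{x}'+d\bv{v})$ is not supplied with a usable density under the bare non-lattice condition $[\textbf{C}_1]$: the Stone-type integro-local theorem only controls probabilities of cubes whose edge shrinks slowly enough (down to some $\delta_n'\to 0$), and Theorem~10 of \cite{mainpaper} likewise gives the conditional avoidance probability on such a cube, not at a point. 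So you cannot simply multiply by $p(\bv{w}+\bv{v})q_{\sbbe(r)}(\bv{v})$ and integrate.

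What the paper actually does is partition $\Delta[\bv{y})$ into $m^d$ sub-cubes of side $\Delta/m$ (with $m=m(n)\to\infty$ chosen so that $\delta_n := \delta_n' m \to 0$), use the \emph{monotonicity} of $p$ along any direction in ${\rm cl}(Q^+)$ (inequality~\eqref{ineqpi2}) to bracket $p$ on each sub-cube by its values at opposite corners, apply the modified Theorems~9 and~10 on each sub-cube, and then identify the resulting bounds as upper and lower Darboux sums for the integrand $p(\bv{w}+\bv{z})q_{\sbbe(r)}(\bv{z})e^{-\lag\bla(\sbbe(r)),\sbv{z}\rag}$. The additive residual $o\bigl(\Delta^d e^{-c_1\|\mathcal{P}(\sbv{w}+\sbv{y})\|-c_2\lag\sbv{\zeta},\sbv{y}\rag}\bigr)$ is exactly the bound on the difference between these Darboux sums: a telescoping argument gives a factor $m^{-1}$, and the decay in $\|\mathcal{P}(\bv{w}+\bv{y})\|$ and $\lag\bv{\zeta},\bv{y}\rag$ comes from the explicit cone-based bound~\eqref{new50} on $p$ (proved using the assumption $\lag\mathbb{E}\bxi,\bv{\zeta}\rag<0$ from $[\textbf{C}_3(r_G)]$) and the exponential weight, not from ``a remainder in the time-reversal step'' as you suggest. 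This bracketing mechanism, and the monotonicity input that makes it work, are the technical core that is missing from your proposal; without them the ``integration'' step does not go through, and the form of the residual is left unexplained.
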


\begin{remark}
{\rm
The point of separating the variables $\bv{x}$ and $\bv{y}$ in the statement of this theorem is that it will be convenient in the next step (Corollary~\ref{coro1})  of the proof of our main result. At that step, we will obtain a representation similar to~\eqref{firstresult11} where instead of the ``small'' cube $\Delta[\bv{y})$ we will have a half-cylinder with a ``small'' base $\Delta^*[\bv{x}) \subset H_0(r_G)$ and generatrix parallel to $\bv{\zeta}$ (to be achieved by ``integrating'' the asymptotics from~\eqref{firstresult11} with respect to $\bv{y}$).

}
\end{remark}

\begin{proof}[Proof of Theorem~\ref{mainthm1}]
Assume for simplicity that $d = 2$ (we will explain at the end of the proof  how the argument changes in the   case $d\ge 3$). Put $\Delta_m := \Delta m^{-1},$ where  $m=m(n)\to \infty$ as $n\to\infty$   slowly enough (the choice of $m$ is discussed below).  For $\bv{y} = (y_1, y_2), $ set
 \begin{equation*}\label{zij}
 \bv{z}^{i,j} := (y_1 + (i-1) \Delta_m, y_2 + (j-1) \Delta_m  ), \quad i,j \ge 1,
\end{equation*}
and partition the square $\Delta[\bv{y})$ into $m^2$ sub-squares $\Delta_m[\bv{z}^{i,j})$:
$
\Delta[\bv{y}) = \bigcup_{1\leq i,j \leq m} \Delta_m[\bv{z}^{i,j}).
$
Clearly, setting $\bv{x}' := n\bbe(r) + \bv{x} \equiv \bv{w} + s\bv{g},$ we have
\begin{align}
P :&=
\mathbb{P}\big( \eta(sG) < \infty, \eta(s\widehat{H}(r_G)) = n, \bv{S}(n) \in \bx' + \Delta[\bv{y}) \big)
\notag
 \\
  &= \sum_{1\leq i,j \leq m}  \mathbb{P}\big( \eta(sG) < \infty, \eta(s\widehat{H}(r_G)) = n,  \bv{S}(n) \in \bx' + \Delta_m[\bv{z}^{i,j})  \big).
\label{ltp1}
\end{align}
Due to the Markov property, the $(i,j)$-th term  in the sum on the RHS of \eqref{ltp1} equals
\begin{align*}
\label{longint}
 \int_{\Delta_m[\sbv{z}^{i,j})}  &\mathbb{P}\big( \eta(sG) < \infty, \eta(s\widehat{H}(r_G)) = n,  \bv{S}(n) \in \bx' + d\bv{v}  \big)
  \nonumber \\
  & =\int_{\Delta_m[\sbv{z}^{i,j})}  \mathbb{P}\big( \eta(sG) < \infty \, \big| \, \eta(s\widehat{H}(r_G)) = n,  \bv{S}(n) = \bx' + \bv{v}  \big) \nonumber \\
&\hphantom{ \int_{\Delta_m[\sbv{z}^{i,j})}}\
\times
 \mathbb{P}\big( \eta(s\widehat{H}(r_G)) = n,  \bv{S}(n) \in \bx' + d\bv{v}  \big)
 \nonumber \\
& =\int_{\Delta_m[\sbv{z}^{i,j})}  p(\bv{w} + \bv{v}) \mathbb{P}\big( \eta(s\widehat{H}(r_G)) = n,  \bv{S}(n) \in \bx' + d\bv{v}  \big)  = : I_{i,j}.
\end{align*}

Now introduce the time-reversed RW
$
 \widetilde{\bv{S}}(k) := \bxi(n) + \bxi(n-1) + \cdots + \bxi(n-k+1),$ $ 1 \leq k \leq n. $
Note that $\eta(s\widehat{H}(r_G))$ is the first time the univariate RW $\{\lag {\bv{S}}(k), \bv{\zeta} \rag\}_{k\ge 0}$ hits the level  $\lag  \bv{x}', \bv{\zeta} \rag$ and that  $ \lag \bv{w}, \bv{\zeta}\rag =0,$  $ \lag \bv{v}, \bv{\zeta}\rag >0$ for $\bv{v}\in \Delta_m[\sbv{z}^{i,j}),$ so that
\begin{equation*}
\{\eta(s\widehat{H}(r_G)) = n, \SU{n} = \bv{x}' + \bv{v} \} = \Big\{\min_{1 \leq k \leq n} \lag \widetilde{\bv{S}}(k), \bv{\zeta} \rag > \lag \bv{v}, \bv{\zeta} \rag, \SU{n} = \bv{x}' + \bv{v} \Big\}.
\end{equation*}
Further, the  function $p(\bv{z})$ is non-decreasing along any ray with a directional vector  $\bv{v} \in {\rm cl}(Q^+)$:
as ${\rm cl}(Q^+) -\bv{z} \subset {\rm cl}(Q^+) -\bv{z} - \bv{v}$ for such $\bv{v}$, one has
\begin{equation}\label{ineqpi2}
\begin{aligned}
p(\bv{z}+\bv{v}) = \mathbb{P}( \eta({\rm cl}(Q^+) - \bv{z} - \bv{v}) < \infty )
\geq \mathbb{P}( \eta({\rm cl}(Q^+) - \bv{z}) < \infty ) = p(\bv{z}). 
\end{aligned}
\end{equation}
Therefore,
\begin{equation}
\label{minmaxpi}
\min_{\sbv{v} \in \Delta_m[\sbv{z}^{i,j}) }p(\bv{v}) =  p(\bv{z}^{i,j}), \qquad \max_{\sbv{v} \in \Delta_m[\sbv{z}^{i,j}) }p(\bv{v}) = p(\bv{z}^{i+1,j+1})
\end{equation}
 and, as clearly $
\lag \bv{z}^{i,j}, \bv{\zeta}\rag \leq \lag \bv{v}, \bv{\zeta} \rag$ for $\bv{v} \in \Delta_m[\bv{z}^{i,j}),$
we obtain that
\begin{align}
\label{52a}
I_{i,j} &\leq \int_{\Delta_m[\sbv{z}^{i,j})} p(\bv{w} + \bv{z}^{i+1, j+1})\mathbb{P}\Big(\min_{1 \leq k \leq n} \lag \widetilde{\bv{S}}(k), \bv{\zeta} \rag > \lag \bv{z}^{i,j}, \bv{\zeta}\rag, \SU{n} = \bv{x}' + d\bv{v} \Big) \notag
\\
&= p(\bv{w} + \bv{z}^{i+1, j+1})  \mathbb{P}\Big(\min_{1 \leq k \leq n} \lag \widetilde{\bv{S}}(k), \bv{\zeta} \rag > \lag  \bv{z}^{i,j}, \bv{\zeta}\rag, \SU{n} \in \bv{x}' + \Delta_m[\bv{z}^{i,j}) \Big)  \notag
 \\
&= p(\bv{w} + \bv{z}^{i+1, j+1})  \mathbb{P}\Big(\min_{1 \leq k \leq n} \lag \widetilde{\bv{S}}(k), \bv{\zeta} \rag > \lag \bv{z}^{i,j}, \bv{\zeta} \rag \big| \SU{n} \in \bv{x}' + \Delta_m[\bv{z}^{i,j}) \Big)  \notag
\\
&\hspace*{60 mm}\times \mathbb{P}\big(\SU{n} \in \bv{x}' + \Delta_m[\bv{z}^{i,j}) \big).
\end{align}
Asymptotic representations for the second and third factors on the RHS can be obtained, respectively, from Theorems~10 and~9 in~\cite{mainpaper}.   The assumptions of these theorems in~\cite{mainpaper} include Cram\'er's strong non-lattice condition $(C_2)$ on the characteristic function of~$\bxi$, but that condition is actually unnecessary provided that $\bxi$ is just non-lattice and  the ``small cube'' edge is only allowed to decay slowly enough (the key tool for such an extension is  the integro-local Stone's theorem, for more detail  see e.g.~\cite{paper10}). Under such weakened conditions, the assertions of Theorems~10 and~9 in~\cite{mainpaper} will still hold   uniformly in the small cube edge lengths in the interval $ [\delta_n',\Delta_0]$ for some sequence $\delta_n'\to 0$.

Now we will choose $m=m(n)\to \infty$ such that $\delta_n:=\delta_n'm\to 0$ as $n\to\infty$
	Since $\bx'/n = \bbe(r) + o(1),$ by the modified version of Theorem~10 in \cite{mainpaper}, for the second factor on the RHS of~\eqref{52a} we   have
	\begin{equation*}
	\mathbb{P}\Big(\min_{1 \leq k \leq n} \lag \widetilde{\bv{S}}(k), \bv{\zeta} \rag > \lag  \bv{z}^{i,j}, \bv{\zeta}
	\rag  \Big| \SU{n} = \bv{x}' + \Delta_m[\bv{z}^{i,j}) \Big) = q_{\sbbe(r)}(\bv{z}^{i,j})(1+o(1))
	\end{equation*}
(cf.\ p.\,264 in \cite{mainpaper}), whereas by the modified version of     Theorem~9 in \cite{mainpaper} (which, roughly speaking, is just a combination of Stone's integro-local theorem with Cram\'er's change of measure, a multi-variate version of Theorem~9.3.1 in~\cite{mainbook2}) for the third factor  on the RHS of~\eqref{52a}  one has the relation
\begin{multline*}
	\label{ilthm}
	\mathbb{P}\big(\SU{n} \in \bv{x}' + \Delta_m[\bv{z}^{i,j}) \big) = \frac{\Delta_m^2(1+o(1))}{2\pi n \sigma((\bv{x}' + \bv{z}^{i,j})/n)} \exp \{-n\Lambda(\bbe(r) + (\bv{x} + \bv{z}^{i,j})/n ) \}.
	\end{multline*}
Now, expanding the rate function in the exponential on the RHS about the point $\bbe(r)$ and using~\eqref{bla(bal)}, we obtain the following representation for the probability on the LHS:
\[
 \frac{\Delta^2_m(1+o(1))}{2\pi n\sigma(\bbe(r))}\exp \Big\{\!\!- \!n\Lambda(\bbe(r))\! -\! \lag \bla(\bbe(r)), \bv{z}^{i,j} \rag \!-\! \frac{1}{2n}\bv{x} \Lambda''(\bbe(r))\bv{x}^T + \theta_{i,j} \Big\},
\]
where the remainders $o(1)$ and $\theta_{i,j} = O(\|\bx^3 \|n^{-2})$ are both uniform in $\Delta \in [\delta_n, \Delta_0]$ and $\bv{z}^{i,j} \in \mathbb{R}^d$, $\bv{x} \in H_0(r_G)$ such that $\|\bx \| \leq \gamma(s)$, $\|\bv{z}^{i,j} \| < M_0$, $\bv{x} + \Delta[\bv{z}^{i,j}) \subset s\widehat{H}(r_G)$.
Here we used the Taylor expansion of $\Lambda$ at $\bbe(r),$  relation~\eqref{bla(bal)}  and that $\lag \bla(\bbe(r)), \bx \rag = 0$ for $\bx \in H_0(r_G).$ Combining the above representations for the factors on the RHS of~\eqref{52a} yields an upper bound for $I_{i,j}$.

In the same way, but  using now the first relation in \eqref{minmaxpi} and the observation that $
\lag \bv{z}^{i+1,j+1}, \bv{\zeta} \rag \geq \lag \bv{v}, \bv{\zeta} \rag,   $   $  \bv{v} \in \Delta_m[\bv{z}^{i,j}),$
we obtain a lower bound for $I_{i,j}$ of the same form as the upper one, but involving $p(\bv{w} + \bv{z}^{i,j})$ and $q_{\sbbe(r)}(\bv{z}^{i+1, j+1})$  on its RHS.

Summing up the obtained upper and lower bounds for $I_{i,j}$, $1 \leq i, j \leq m$, we see from \eqref{ltp1} that
\begin{align*}
\Delta_m^2 & \sum_{1 \leq i, j \leq m} p(\bv{w}   + \bv{z}^{i,j})q_{\sbbe(r)}(\bv{z}^{i+1, j+1})e^{-\lag \sbla(\sbbe(r)), \sbv{z}^{i, j} \rag}(1+o(1))
  \\
& \leq J:= 2\pi n \sigma(\bbe(r))\exp \Big\{n\Lambda(\bbe(r)) + \frac{1}{2n}\bv{x}\Lambda''(\bbe(r))\bv{x}^T - \theta \Big\} P
\\
& \leq \Delta_m^2 \sum_{1 \leq i, j \leq m} p(\bv{w} + \bv{z}^{i+1,j+1})q_{\sbbe(r)}(\bv{z}^{i, j})e^{-\lag \sbla(\sbbe(r)), \sbv{z}^{i, j} \rag}(1+o(1)),
\end{align*}
where $\theta = O(\|\bx^3 \|/n^2)$.  As $\|\bv{z}^{i,j}- \bv{z}^{i+1,j+1}\|=2^{1/2}\Delta/m\to 0,$  we can  now  replace $\lag \bla(\bbe(r)), \bv{z}^{i,j} \rag$ in the lower bound  with $\lag \bla(\bbe(r)), \bv{z}^{i+1,j+1} \rag,$ yielding
\begin{multline*}
\Delta_m^2 \sum_{1 \leq i, j \leq m} p(\bv{w} + \bv{z}^{i,j})q_{\sbbe(r)}(\bv{z}^{i+1, j+1})e^{- \lag \sbla(\sbbe(r)), \sbv{z}^{i+1,j+1} \rag}(1+o(1))  \\
\leq J
\leq \Delta_m^2 \sum_{1 \leq i, j \leq m} p(\bv{w} + \bv{z}^{i+1,j+1})q_{\sbbe(r)}(\bv{z}^{i, j})e^{- \lag \sbla(\sbbe(r)), \sbv{z}^{i,j} \rag}(1+o(1)).
\end{multline*}
Observe that the LHS (RHS) in the above formula is, up to the factor $(1+o(1))$, the lower (upper) Darboux sum for the function
\begin{equation}
\label{48a}
p(\bv{w} + \bv{z})q_{\sbbe(r)}(\bv{z})e^{- \lag \sbla(\sbbe(r)), \sbv{z} \rag}, \qquad \bv{z} \in \Delta[\bv{y}).
\end{equation}
It is not hard to see that the difference between the sums vanishes uniformly as $s \to \infty$, and so they both tend to the Riemann integral $E(r, \bv{w}, \Delta[\bv{y}))$ of that function over $\Delta[\bv{y})$.

Indeed, setting, for a function $h(\bv{z})$, $\bv{z} \in \mathbb{R}^2$,
\begin{equation*}
\overline{h}^{i,j} := h(\bv{z}^{i+1, j+1}), \quad \underline{h}^{i,j} := h(\bv{z}^{i,j}), \quad i, j \geq 1
\end{equation*}
(the values of $h$  at the top-right and left-bottom vertices of the sub-squares $\Delta_m[\bv{z}^{i,j})$, respectively)  and letting $
f(\bv{z}) := p(\bv{w}+\bv{z}), $   $
g(\bv{z}) := q_{\sbbe(r)}(\bv{z})e^{-\lag \sbla(\sbbe(r)), \sbv{z} \rag},$
the difference between the upper and lower Darboux sums for~\eqref{48a} on $\Delta[\bv{y})$ can be written, suppressing the superscripts $i,j$ in all the factors, as
\begin{equation*}
\delta := \Delta_m^2\sum_{1 \leq i,j \leq m}\big( \overline{f} \underline{g} - \underline{f} \overline{g}\big).
\end{equation*}
Using monotonicity of both $f(\bv{z})$ (see~\eqref{ineqpi2}) and the exponential factor $e^{-\lag \sbla(\sbbe(r)), \sbv{z} \rag}$ along directions from $Q^+$, we can bound the value of the sum here as follows:
\begin{align}
\label{dsum}
\sum_{1 \leq i,j \leq m}\big( \overline{f}\underline{g} - \underline{f}\overline{g}\big)
& =
\sum_{1 \leq i,j \leq m} \big(\overline{f}-\underline{f} \big)\underline{g} + \sum_{1 \leq i,j \leq m} \underline{f} \big(\underline{g}-\overline{g} \big)
\notag
\\
& \leq
e^{-\lag \sbla(\sbbe(r)), \sbv{y} \rag}\sum_{1 \leq i,j \leq m} \big(\overline{f}-\underline{f} \big) + f(\bv{z}^{m,m})\sum_{1 \leq i,j \leq m} \big(\underline{g}-\overline{g} \big).
\end{align}
Since $\overline{f}^{i,j} = \underline{f}^{i+1,j+1}$, $1 \leq i, j \leq m-1$, using the telescoping argument we see that the first sum on the RHS of~\eqref{dsum} equals
\begin{align*}
 \sum_{2 \leq i \leq m+1} f(\bv{z}^{i,m+1}) &  -  \sum_{1 \leq i \leq m} f(\bv{z}^{i,1})
 \\
 & + \sum_{2 \leq j \leq m} f(\bv{z}^{m+1,j}) -\sum_{2 \leq j \leq m} f(\bv{z}^{1,j})  \leq 2mf(\bv{z}^{m+1,m+1}),
\end{align*}
whereas  the second sum on the RHS of~\eqref{dsum}, using the same argument, is seen to be bounded from above by $
2mg(\bv{y}) \leq 2me^{-\lag \sbla(\sbbe(r)), \sbv{y} \rag}.
$
Summarizing, we obtain that
\begin{align}
\label{55a}
\delta \leq 4\Delta^2 m^{-1}e^{-\lag \sbla(\sbbe(r)), \sbv{y} \rag} f(\bv{z}^{m+1,m+1}),
\end{align}
where $f(\bv{z}^{m+1, m+1}) = p(\bv{w} + \bv{y} + (\Delta, \Delta)).$

To bound the last quantity, we will derive a bound for the function $p(\bv{u})$ in the general case $d \geq 2$.
It follows from the condition that $\lag \mathbb{E}\bxi, \bv{\zeta} \rag < 0$ (part of  {\rm [\textbf{C}$_3(r_G)$]}) that there exists a
\begin{equation}
\label{Cone_C}
\begin{split}
&\mbox{closed round cone $C \supset Q^+$ with the axis direction $\bv{\zeta}$, apex at $\bv{0}$ }\\
&\mbox{and the opening angle $\pi - 2\phi$ with $\phi > 0$ such that $- \mathbb{E}\bxi \in C$.}
\end{split}
\end{equation}
Clearly, $C \subset \widehat{H}_0(r_G)$. For any $\bv{u} \in \widehat{H}_0(r_G)\backslash C$, denote by $
\bv{u}' := \argminA_{\sbv{v} \in C}\|\bv{u} - \bv{v} \|
$
the nearest to $\bv{u}$ point of $C$ and let
\begin{equation*}
\bv{\varkappa}(\bv{u}) := \frac{\bv{u}' - \bv{u}}{\|\bv{u}' - \bv{u} \|}
\end{equation*}
be the inner normal to $\partial C$ at that point. Denote by
$
\widehat{T}(\bv{u}) := \{\bv{v} \in \mathbb{R}^d: \lag \bv{v}, \varkappa(\bv{u}) \rag \ge 0 \}
$
the half-space containing $C$ and bounded by the tangent to $\partial C$ hyperplane passing through the point $\bv{u}'$ (and the origin). Clearly,
\begin{align*}
p(\bv{u}) &\leq \mathbb{P}\big(\eta(C-\bv{u}) < \infty \big)
\\
& \leq \mathbb{P}\big(\eta(\widehat{T}(\bv{u}) - \bv{u}) <  \infty \big)
\\
& \leq \mathbb{P}\Big(\sup_{n \geq 1} \lag \bv{S}(n), \varkappa(\bv{u}) \rag \ge \|\bv{u}' - \bv{u} \| \Big)
\\
& = \mathbb{P}\Big(\sup_{n \geq 1}S_{\sbv{u}}(n) \geq (\| \mathcal{P}(\bv{u}) \|\tan \phi - \lag \bv{u}, \bv{\zeta} \rag )\sin \phi   \Big),
\end{align*}
where $S_{\sbv{u}}(n) := \lag \bv{S}(n), \varkappa(\bv{u}) \rag \equiv \sum_{k = 1}^{n} \lag \bxi(k), \varkappa(\bv{u}) \rag$ is a univariate RW with the negative drift: $
\mathbb{E} \lag \bxi, \varkappa(\bv{u})\rag = - \lag -\mathbb{E}\bxi, \varkappa(\bv{u}) \rag < 0
$
since $- \mathbb{E} \bxi \subset C \subset \widehat{T}(\bv{u})$ and $\varkappa(\bv{u})$ is the inner normal vector to $\partial \, \widehat{T}(\bv{u}),$ so that $\lag - \mathbb{E}\bxi, \varkappa(\bv{u}) \rag > 0.$ Therefore
\begin{equation}
\label{new50}
p(\bv{u}) \leq e^{-\nu(\varkappa(\sbv{u}))(\| \mathcal{P}(\sbv{u}) \|\tan \phi - \lag \sbv{u}, \sbv{\zeta} \rag )\sin \phi},
\end{equation}
where $
\nu(\varkappa(\bv{u})) := \sup\{\nu \in \mathbb{R}: \mathbb{E} e^{\nu \lag \sbv{\xi}, \varkappa(\sbv{u}) \rag} \leq 1  \} > 0
$
(see p.\ 81 in~\cite{ASM}). That $\nu(\varkappa(\bv{u})) > 0$ follows from condition {\rm [\textbf{C}$_3(r_G)$]} and the fact that $\phi > 0$ can be chosen arbitrary small thus making all the vectors  $\varkappa(\bv{u})$ with $ \bv{u} \in \widehat{H}_0(r_G)\backslash C $ arbitrary close to $\bv{\zeta} \equiv \bla(\bal(r_G))/\|\bla(\bal(r_G)) \|$ with $\bla(\bal(r_G)) \in \Theta_{\psi}$. This also implies that
\begin{equation*}
\nu_0 := \inf_{\sbv{u} \in \widehat{H}_0(r_G)\backslash C} \nu(\varkappa(\bv{u})) > 0,
\end{equation*}
which, together with~\eqref{55a} and~\eqref{new50}, yields the bound
\begin{align*}
\delta &\leq c\Delta^2m^{-1}\exp\{-\lag \bla(\bbe(r)), \bv{y} \rag - \nu_0(\| \mathcal{P}(\bv{w} + \bv{y}) \|\tan \phi - \nu_0\lag \bv{\zeta}, \bv{y} \rag ) \sin \phi     \}
\\
&\leq c\Delta^2m^{-1}\exp\{ -c_1\|\mathcal{P}(\bv{w} + \bv{y}) \| - c_2\lag \bv{\zeta}, \bv{y} \rag \}
\end{align*}
for small enough $c_1, c_2 > 0$ (as $\bla(\bbe(r)) = h\bv{\zeta}$ for $h$ bounded away from zero and $\phi$ can be chosen arbitrary small). Therefore,
\begin{equation*}
J = E(r, \bv{w}, \Delta[\bv{y}))(1+o(1)) + o\big(\Delta^2e^{-\lag \sbbe(r), \sbv{y} \rag -c_1\|\mathcal{P}(\sbv{w}+\sbv{y}) \| - c_2\lag \sbv{\zeta}, \sbv{y} \rag}\big)
\end{equation*}
uniformly in the specified range.  This completes the proof in the case $d=2$.

For $d\ge 3,$   we partition $\Delta[ \boldsymbol{y} )\subset\mathbb{R}^d$ into $m^d$ small cubes (instead of  $m^2$ small squares, as in the case $d=2$). After that, all the computations are done in the same way as above  (including~\eqref{minmaxpi}, where the min and max of $p$ are now attained at the opposite vertices of the small cubes), except for the ``telescoping argument" following~\eqref{dsum}. Instead of the sums over the nodes   on the edges of the square $\Delta[ \boldsymbol{y} ),$ we end up now with sums over the nodes on the faces of the cube  $\Delta[ \boldsymbol{y} ),$ yielding a factor $m^{d-1}$ instead of $m$. But as we then divide the result by $m^d$ (instead of $m^2$, which was the case when $d=2$), we end up with the same desired final result. Theorem~\ref{mainthm1} is proved.
\end{proof}

Next we will use Theorem~\ref{mainthm1}, ``integrating" representation~\eqref{firstresult11} to compute the probability of ever hitting $sG$ localizing only the time when  $\bv{S}$ first hits~$s\widehat{H}(r_G)$ and the projection onto $H_0(r_G)$ of the point where $\bv{S}$   enters that set. This  result will be used in the key step in the proof of Theorem~\ref{finalthm}, when evaluating the contribution of the main term~$P_3$ (to be defined in~\eqref{PPP}).

Fix a cartesian coordinate system in the hyperplane $H_0(r_G)$ and, for $\bv{v} \in H_0(r_G)$ and $\Delta > 0$, denote by $\Delta^*[\bv{v})$ the $(d-1)$-dimensional cube in $H_0(r_G)$ with edges parallel to the axes in the chosen coordinate system, the ``left--bottom'' vertex at $\bv{v}$ and the edge length~$\Delta$ (cf.~\eqref{deltay}). Denote by
\begin{equation*}
W(\Delta^*[\bv{v})) := \bigcup_{t \geq 0}\{\Delta^*[\bv{v}) + t\bv{\zeta} \}
\end{equation*}
the half-cylinder with the  base $\Delta^*[\bv{v})$ and generatrix parallel to the unit normal $\bv{\zeta}$ to $H_0(r_G)$. Recall notation $\bv{w} = n\bbe(r) - s\bv{g} + \bv{x}$ from Theorem~\ref{mainthm1} and set
 \begin{equation}
 \label{XIXI}
 \Xi(s,n):= \frac{e^{-n\Lambda(\sbbe(r)) }     }{(2\pi n)^{d/2} \sigma(\bbe(r))}, \qquad {\rm where} \quad r = \frac{s}{n}.
 \end{equation}
Following Remarks $1$ and $3$ from \cite{mainpaper}, one can ``tile'' the half-cylinder $W(\Delta^*[\bv{0}))$ with ``small'' cubes $\Delta'[\bv{y})$ with $\Delta' \to 0$ and then sum up the representations for those small cubes given by Theorem~\ref{mainthm1}  thus ``integrating'' these local representations to obtain the following result.

\begin{corollary}
\label{coro1}
There exists a sequence $\delta_n^*\to 0$ as $n\to\infty$ such that, for any fixed $\Delta_0 >0$ and $\gamma \in \mathscr{G}$, one has, as $ s \to \infty$,
\begin{align}
\label{firstresult2}
 \mathbb{P}\big( \eta(sG) < \infty,  \eta \big(s\widehat{H}(r_G)\big) &= n ,\bv{S}(n) \in n\bbe(r) + \bv{x} + W(\Delta^*[\bv{0})) \big) \notag
 \\
 &=
 \Xi (s,n)\exp\Big\{-\frac{1}{2n}\bx \Lambda''(\bbe(r))\bx^T + O\Big(\frac{\|\bx \|^3}{n^2} \Big) \Big\} \notag
\\
&\hspace{10 mm} \times \big[E\big(r, \bv{w}, W(\Delta^*[\bv{0}))  \big)(1  + o(1)) + R \big],
\end{align}
where $
R = o\Big( \int_{\Delta^*[\sbv{0})} e^{-c_1\|\sbv{w} \|} d\mu(\bv{w})\Big),
$
$\mu$ being the $(d-1)$-dimensional volume measure on $H_0(r_G)$, the $o(\cdot)$-term being uniform in  $\bv{x} \in H_0(r_G)$ and $n \geq 1$ such that $\|\bx \| \leq \gamma(s) $,  $\big|n - s/r_G\big|  \leq \gamma(s) $ and $\Delta \in [\delta_n^* , \Delta_0]$.
\end{corollary}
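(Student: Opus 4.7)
The plan is to ``tile'' the half-cylinder $W(\Delta^*[\bv{0}))$ with a countable family of small $d$-dimensional cubes of edge length $\Delta'\to 0$, apply Theorem~\ref{mainthm1} at scale $\Delta'$ to each cube, and sum the resulting local representations, relying on the exponential decay of the integrand defining $E$ to control the tail. Concretely, I would fix a Cartesian coordinate system on $\mathbb{R}^d$ whose $d$-th axis direction is $\bv{\zeta}$ and whose first $d-1$ axes span $H_0(r_G)$ and are aligned with the edges of $\Delta^*[\bv{0})$. In these coordinates $W(\Delta^*[\bv{0}))=\Delta^*[\bv{0})\times[0,\infty)$, and by choosing $\Delta':=\Delta/m(n)$ with an integer $m(n)\to\infty$ slowly enough that $\Delta'\ge\delta_n'$ (the sequence supplied by Theorem~\ref{mainthm1}), we get an exact disjoint decomposition $W(\Delta^*[\bv{0}))=\bigsqcup_k \Delta'[\bv{y}_k)$. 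Setting $\delta_n^*:=m(n)\delta_n'$, which tends to $0$, then guarantees $\Delta'\in[\delta_n',\Delta_0]$ for every admissible $\Delta\in[\delta_n^*,\Delta_0]$, so Theorem~\ref{mainthm1} is applicable cube by cube.

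For a slowly growing threshold $M_0=M_0(s)\to\infty$, applying Theorem~\ref{mainthm1} to each bulk cube with $\|\bv{y}_k\|\le M_0$ and factoring out the common prefactor $\Xi(s,n)\exp\{-\tfrac{1}{2n}\bx\Lambda''(\bbe(r))\bx^T+O(\|\bx\|^3/n^2)\}$, the sum of main terms $\sum_k E(r,\bv{w},\Delta'[\bv{y}_k))$ equals $E\big(r,\bv{w},\bigcup_k \Delta'[\bv{y}_k)\big)$ by countable additivity, and converges to $E(r,\bv{w},W(\Delta^*[\bv{0})))$ as $M_0\to\infty$ because the integrand decays exponentially along $\bv{\zeta}$ (since $\bla(\bbe(r))=h\bv{\zeta}$ with $h>0$) and, via the bound~\eqref{new50} on $p$, also in the transverse directions. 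The uniform $(1+o(1))$ factor from Theorem~\ref{mainthm1} carries through the sum. The local error terms assemble into a Riemann sum with a uniform $o(1)$ coefficient converging to
\[
o\!\left(\int_{W(\Delta^*[\sbv{0}))} e^{-c_1\|\mathcal{P}(\sbv{w}+\sbv{v})\|-c_2\lag\sbv{\zeta},\sbv{v}\rag}\,d\bv{v}\right)=o\!\left(\int_{\Delta^*[\sbv{0})} e^{-c_1\|\sbv{w}\|}\,d\mu(\bv{w})\right)
\]
after integrating out the $\bv{\zeta}$-direction (which contributes the finite factor $1/c_2$), matching the claimed bound on~$R$.

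The main obstacle is controlling the tail contribution from cubes with $\|\bv{y}_k\|>M_0(s)$, where Theorem~\ref{mainthm1} does not directly apply. I would handle it by a separate crude bound, combining the integro-local asymptotics for $\mathbb{P}(\bv{S}(n)\in \bv{x}'+\Delta'[\bv{y}_k))$ (used inside the proof of Theorem~\ref{mainthm1}) with the exponential factor $e^{-\lag\sbla(\sbbe(r)),\sbv{v}\rag}=e^{-h\lag\sbv{\zeta},\sbv{v}\rag}$ arising from the large-deviation exponent at the ``entry point'' $\bv{v}$ into $s\widehat{H}(r_G)$, and with the bound~\eqref{new50} on $p(\bv{w}+\bv{v})$; together these show that the tail total is exponentially smaller in $M_0(s)$ than the leading $E$-term and so is absorbed into the overall $o(1)$ correction. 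This completes the extension of the cube-level representation from Theorem~\ref{mainthm1} to the infinite half-cylinder.
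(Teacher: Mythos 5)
Your proof follows essentially the same route as the paper's: tile the half-cylinder $W(\Delta^*[\bv{0}))$ with small cubes, apply Theorem~\ref{mainthm1} to each, and sum the local representations, using the exponential decay of the integrand of $E$ (along $\bv{\zeta}$ via the factor $e^{-\lag\sbla(\sbbe(r)),\sbv{v}\rag}$ and transversely via the bound~\eqref{new50} on~$p$) to control both the convergence of the main terms to $E(r,\bv{w},W(\Delta^*[\bv{0})))$ and the absorption of the per-cube error terms into the claimed bound on~$R$. One minor simplification the paper has in mind but you do not use: since $W(\Delta^*[\bv{0}))=\Delta^*[\bv{0})\times[0,\infty)$ is already a product, one can simply stack $d$-cubes $\Delta[j\Delta\bv{\zeta})$, $j\ge 0$, of the same edge $\Delta$ along $\bv{\zeta}$ (choosing $\bv{y}\perp H_0(r_G)$ as in the paper's remark), so the transverse subdivision into edge $\Delta/m(n)$ and the accompanying bookkeeping with $\delta_n^*=m(n)\delta_n'$ are not strictly necessary.
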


We just note here that the bound for $R$ is obtained by choosing $\bv{y} \perp H_0(r_G)$ in Theorem~\ref{mainthm1} and integrating along the direction of $\bv{\zeta}$.

Now we are ready to proceed to proving the main result of the paper.

\begin{proof}[{\it Proof of Theorem~\ref{finalthm}}]
First we will partition the half-space  $s\widehat{H}(r_G)\supset s G$ into several  subsets and, for each of them,   evaluate the probability of ever hitting $sG$  when the RW first hits  $s\widehat{H}(r_G)$ in the respective partition element.    The ways we will be doing these computations  will be different for different elements of the partition.

We will now assume that   $d=2$ as in this case it is easier to explain how we do the evaluation. The construction to be used when $d\ge 3$ is described later, just after~\eqref{no2}.

Let $\bv{e} = (e_1, e_2) := (\zeta_2, -\zeta_1)$ be the unit vector orthogonal to $\bv{\zeta}$ such that $e_1 > 0$.
For $M \geq 1$ (to be chosen later), put $\bv{a}_{\pm} := s\bv{g} \pm (M\ln s)\bv{e}$ and  consider the sets
 \begin{align*}
V_{+}  := \{\bv{v} \in s\widehat{H}(r_G): \lag \bv{v}, \bv{e}\rag \ge   \lag \bv{a}_{+}, \bv{e} \rag \},
 \quad
 V_{-}  := \{\bv{v} \in s\widehat{H}(r_G): \lag \bv{v}, \bv{e}\rag <  - \lag \bv{a}_{-}, \bv{e} \rag \}.
 \end{align*}
Next we will  split each of the sets $V_{\pm}$ into two parts. We need to consider two alternative situations, depending  on whether   $\mathbb{E}\bv{\xi}$ is in $ - Q^+$   or not.

{\em Case $\mathbb{E} \bxi \in - Q^+.$} In that case, we put  (see Fig.\ 2)
\begin{align*}
V_{1+} := V_+ \cap \{ \bv{v}: v_2 \leq sg_2 - \mbox{$ \frac12$} (M\ln s)|e_2|  \},  \quad
V_{1-}  := V_{-} \cap \{\bv{v}: v_1 \leq sg_1 - \mbox{$ \frac12$}  (M\ln s)e_1 \}
\end{align*}
and set
\begin{equation}
\begin{aligned}
\label{no1}
V_{2-} &:= V_{-}\backslash V_{1-},
\qquad
 V_{2+} :=  V_{+} \backslash V_{1+},\\
V_1 := V_{1+} \cup &V_{1-}, \quad V_2 := V_{2+} \cup V_{2-}, \quad V_3 := s\widehat{H}(r_G) \backslash (V_{-} \cup V_{+}).
\end{aligned}
\end{equation}

\begin{figure}[ht]	
	\vspace{-7mm}
	\centering
	\includegraphics[scale=0.65]{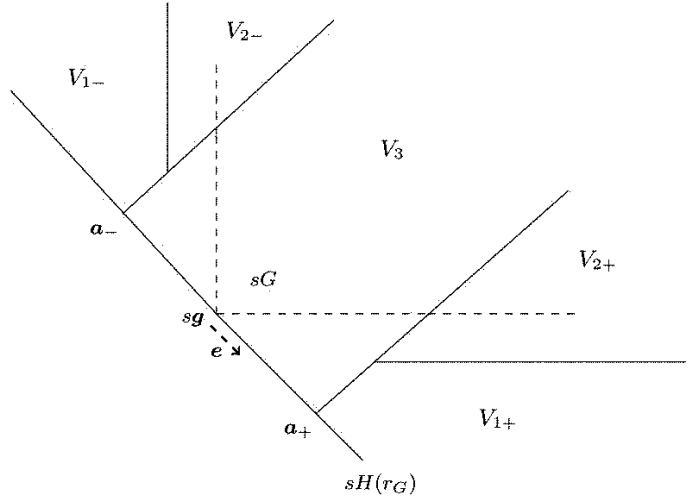}
	\vspace{-5mm}
	\caption{The auxiliary sets $V_{j\pm}$, $j = 1, 2,$ and $V_3$ in the case  $\mathbb{E} \bxi \in -Q^+$.}
	\label{figure:2}
\end{figure}

{\em Case when $\mathbb{E} \bxi \notin -Q^+$} (but  {\rm [\textbf{C}$_3(r_G)$]} is still met, i.e., $\lag \mathbb{E}\bxi, \bv{\zeta} \rag < 0$). Here the  above  simple   construction of the sets $V_{j\pm}$ must be somewhat modified. For definiteness, assume that $\mathbb{E} \xi_2 > 0$, so that    $\mathbb{E} \bxi$ lies  in the interior of the second quadrant, implying that $\lag \mathbb{E} \bxi, \bv{e} \rag<0.$ In that case, all what we have to change in the above definition of the sets $V_{\boldsymbol{\cdot}}$ is to amend how $V_{j+}, j = 1, 2$ are specified ($V_{j-}$ stay the same; in the alternative case, when $\mathbb{E} \xi_1 > 0$, one has to redefine $V_{j-}, j = 1, 2,$ keeping $V_{j+}$ unchanged).

This is done as follows. Introduce the points
\begin{equation*}
\bv{a}'_+ := s\bv{g} + \frac{\mathbb{E} \bxi}{\lag \mathbb{E} \bxi, \bv{e} \rag} M\ln s
\end{equation*}
(which is the intersection of the ray emanating from $s\bv{g}$ in the direction of $-\mathbb{E}\bxi$ and the straight line parallel to $\bv{\zeta}$ and passing through $\bv{a}_+$) and
\begin{align*}
\bv{a}''_+ := \bv{a}_+ + \frac{1}{3}&(\bv{a}'_+ - \bv{a}_+) = s\bv{g} + \bigg(\frac{2}{3}\bv{e} +  \frac{\mathbb{E} \bxi}{3\lag \mathbb{E} \bxi, \bv{e} \rag}\bigg)M\ln s,
\\
\bv{a}_0 &:= s\bv{g} - \bigg( \frac{\mathbb{E} \bxi}{\lag \mathbb{E} \bxi, \bv{e} \rag} - \bv{e} \bigg)\frac{M\ln s}{3}.
\end{align*}
In words, $\bv{a}''_+$ is at one third of the way from $\bv{a}_+$ to $\bv{a}'_+$ going along the direction of~$\bv{\zeta}$, whereas $\bv{a}_0$ is at the same distance from $s\bv{g}$ in the opposite way (see Fig.~\ref{figure:3}).

\begin{figure}[ht]
	\centering
	\includegraphics[scale=0.8]{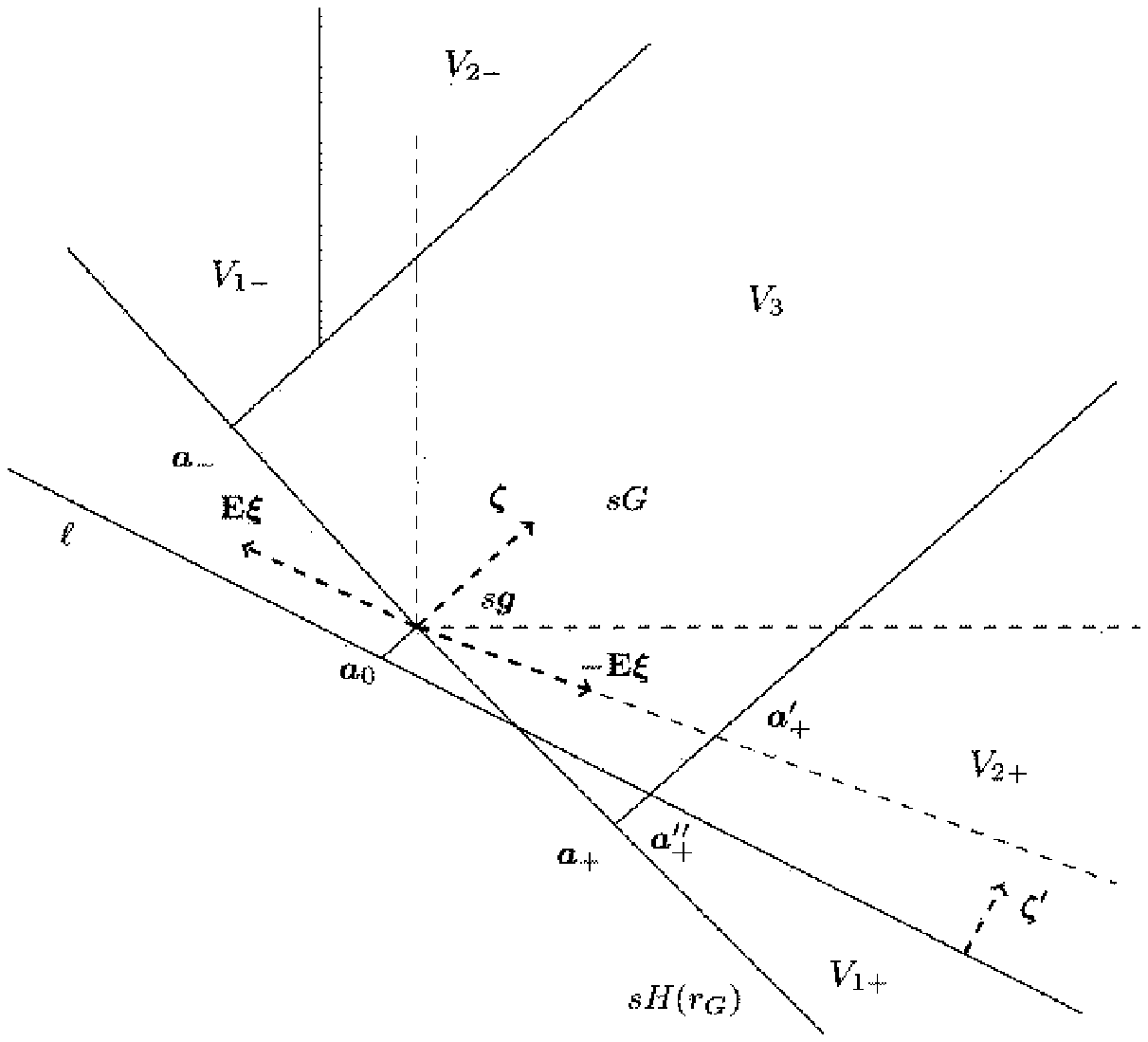}
	\vspace{-5mm}
	\caption{The auxiliary sets $V_{j\pm}$, $j = 1, 2,$ and $V_3$ in the case $\mathbb{E} \bxi \notin -Q^+$.}
	\label{figure:3}
\end{figure}

Now we define $V_{1+}$ as the intersection of $V_+$ with the half-plane lying underneath the straight line $\ell$ going through the points $\bv{a}_0$ and $\bv{a}''_+$:
\begin{equation}
\label{no2}
  V_{1+}:= V_+\cap \bigg\{\bv{v} \in \mathbb{R}^2: \bv{v} = \bv{a}_0 + x\bigg(\frac{1}{3}\bv{e} +  \frac{2\mathbb{E} \bxi}{3\lag \mathbb{E} \bxi, \bv{e} \rag} \bigg) - y\bv{\zeta}, \,\, x\in\mathbb{R}, y \geq 0 \bigg\}.
\end{equation}
All the other sets $V_{\boldsymbol{\cdot}}$ are defined now according to~\eqref{no1}.

For $d\ge 3 $ we   use a general construction of the $V_j$'s (there will only be  three sets here, no need for $V_{j\pm}$) that extends~\eqref{no2}. It is applicable whether $\mathbb{E} \bxi $ lies in  $-Q^+$ or not. We first set $V_3:= \{\bv{v}=s\bv{g}+\bv{u}\in s  \widehat{H} (r_G): \|\bv{u}-\langle \bv{u},\bv{\zeta}\rangle \bv{\zeta}\|\le M\ln s\}$ to be a ``round" half-cylinder in $ s  \widehat{H}(r_G)$ with generatrix parallel to $\bv{\zeta}$ and the base that is the $(d-1)$--dimensional ball that  is a subset of  $ s H(r_G)$,  has its center at $s\bv{g}$ and is of radius~$M\ln s.$ Then we use the cone $C$ described in~\eqref{Cone_C} to define
\[
C_s: = s\bv{g}- \frac{M\ln s}{3\tan \phi}\bv{\zeta}+C,
 \quad V_1:= V_3^c \cap C_s,\quad V_2:= V_3^c \setminus V_1.
\]

Now set $\eta_s := \eta(s\widehat{H}(r_G) )$ and write
\begin{align}
\label{PPP}
\mathbb{P}\big(\eta(sG) < \infty \big) =\sum_{j = 1}^3 \mathbb{P}\big(\eta(sG) < \infty,  \bv{S}(\eta_s) \in V_j \big)=:\sum_{j=1}^3 P_j.
\end{align}
We will show that $P_1$ and $P_2$ are negligibly small compared to the RHS of~\eqref{lastassertion}. After that, we will  use Corollary~\ref{coro1} to demonstrate that, choosing a large enough~$M$, the term $P_3$ can be made arbitrary (relatively) close to the RHS of~\eqref{lastassertion}.

\smallskip

{\it Bounding}  $P_1.$ First we note that in the case $d=2$ one has
\[
P_1=P_{1-}+P_{1+}, \quad P_{1\pm}:= \mathbb{P}\big(\eta(sG) < \infty, \bv{S}(\eta_s) \in V_{1\pm} \big) .
\]
Assume that $\mathbb{E}\bxi \in -Q^+.$ In that case,
\begin{align*}
P_{1+} &:= \int_{V_{1+}}\mathbb{P}\big(\eta(sG) < \infty | \eta_s <\infty, \bv{S}(\eta_s) = \bv{v} \big) \mathbb{P}\big( \eta_s <\infty, \bv{S}(\eta_s) \in d\bv{v}  \big)
\\
& \leq \int_{V_{1+}} \mathbb{P}\Big(\sup_{n \geq 1} S_2(n) \geq 2^{-1} (M\ln s)|e_2| \Big) \mathbb{P}\big(\eta_s < \infty, \bv{S}(\eta_s) \in d\bv{v} \big)
\\
& =  \mathbb{P}\Big(\sup_{n \geq 1} S_2(n) \geq 2^{-1} (M\ln s)|e_2| \Big)\int_{V_{1+}} \mathbb{P}\big(\eta_s < \infty, \bv{S}(\eta_s) \in d\bv{v} \big)
\\
&\leq s^{-c_0M} \mathbb{P}\big(\eta_s <\infty \big), \qquad c_0 := 2^{-1}|e_2|\nu_0 >0,
\end{align*}
where we used the strong Markov property to obtain the first inequality and a bound of the form~\eqref{new50} for the distribution tail of $\sup_{n \geq 1} S_2(n).$ That $|e_2|>0$ is due to condition~[$\textbf{C}_3(r_G)$] (as  it excludes situations where $H(r_G)$ is parallel to any of the coordinate axes). The term $P_{1-}$ is bounded in the same way.

Since $\mathbb{P}\big(\eta_s < \infty \big)~\sim~ce^{-sD(s\widehat{H}(r_G))}$ as $s \to \infty$ by Theorem~7 in \cite{mainpaper} and $ D(\widehat{H}(r_G)) = D(G) $ by Lemma~\ref{DG}, we showed that, for some $0 < c, c_1 < \infty$,
\begin{equation}
\label{star1}
P_1 \leq cs^{-c_1M}e^{ - sD(G)}.
\end{equation}
Choosing $M > 1/(2c_1)$ ($M > (d-1)/(2c_1)$ when $d > 2$) completes the argument.

Now we turn to the case when $\mathbb{E}\bxi \notin Q^+,$ $\mathbb{E}\xi_2 > 0$   and use the alternative construction~\eqref{no2} of $V_{1+}$.  Note that that half-space is separated from $sG$ by a gap of width $cM\ln s$ for some $c > 0$ in the direction orthogonal to~$\ell$.  Further, denote by $\bv{\zeta}'$ a unit vector orthogonal to $\ell$ and such that $\lag \bv{\zeta}, \bv{\zeta}' \rag > 0$ (so that $\bv{\zeta}'$ is pointing in the direction of $sG$). It is easy to verify that, by the above construction, one has
$
\mathbb{E}\lag \bxi, \bv{\zeta}' \rag < 0.
$
This means that we are in the same situation as above, when considering the case $\mathbb{E} \bxi \in -Q^+,$ and can use the same argument to establish that $P_1$ is negligibly small.

The last argument extends in a straightforward way to the case $d \ge 3$ as well:  by construction, in that case the set $V_1$ is ``separated" from $sG$ by a gap of (variable) width $\ge c M\ln s$ for some $c>0. $

\smallskip

{\it  Bounding} $P_2 = \mathbb{P}\big( \eta(sG) < \infty, \bv{S}(\eta_s) \in V_2 \big)$.  We again start with the case $d=2$.
It is clear from our constructions (see Figs.\,2 and \ref{figure:3}) that there exists a $c_2 > 0$ such that $V_2 \subset s_1\widehat{H}(r_G)$ with  $s_1 := s + c_2M\ln s$ (one can take $c_2 := (M\ln s)^{-1}\min_{\sbv{v} \in V_2} \lag \bv{v}, \bv{\zeta} \rag$, where the minimum is attained at the vertex of one of the sets $V_{2\pm}$). Therefore, again using Theorem~7 in~\cite{mainpaper} and our Lemma~\ref{DG}, we have
\begin{align}
P_2 &\leq \mathbb{P}\big(\eta_s < \infty, \bv{S}(\eta_s) \in V_2 \big)
\leq
\mathbb{P}\big(\eta(s_1\widehat{H}(r_G) )  <  \infty \big) \notag
\\
& \sim ce^{-s_1D(\widehat{H}(r_G))}
\label{star2}
= cs^{-c_2MD(G)}e^{ -sD(G)}.
\end{align}
Choosing a large enough $M$, we establish the desired result. There is no change in the argument when $d\ge 3.$

\smallskip

{\it Evaluating} $P_3 = \mathbb{P}\big(\eta(sG) < \infty, \bv{S}(\eta_s) \in V_3 \big).$ Clearly,
\begin{equation}
\label{53a}
P_3  = \sum_{n = 1}^{\infty} P_{3,n},  \qquad P_{3,n} := \mathbb{P}\big(\eta(sG) < \infty, \eta_s = n, \bv{S}(n) \in V_3 \big), \quad n \geq 1.
\end{equation}
First we will compute the sum of the terms $P_{3,n}$ with
\begin{equation*}
n \in N_s := \{n: |n - su_G | \le Ms^{1/2}  \}.
\end{equation*}
In the assertion of Corollary~\ref{coro1}, choose $\gamma(s) := Ms^{1/2},$ where $M = M(s) \to \infty$ slowly enough so that the term $O(\|\bx \|^3/n^2)$ in the exponential in~\eqref{firstresult2} is $o(1)$ for $\|\bx \| \leq \gamma(s)$ (i.e., $M= o(s^{1/6})).$ For a $\Delta > 0$, let $m := (M\ln s) / \Delta$ (we can assume without loss of generality that $m \in \mathbb{N}$). First assume for simplicity that $d=2$ and  set $\bv{t}_k :=  k\Delta \bv{e} $ and $\bv{z}_k := s\bv{g} + \bv{t}_k$, $k = -m, \ldots, m$ (so that $\bv{z}_{-m} = \bv{a}_-$ and $\bv{z}_m = \bv{a}_+).$  Recalling that $r = 1/u$ and $r_G = 1/u_G$, in view of Corollary~\ref{coro1} with $\bv{x} = \bv{x}_k := \bv{z}_k - n\bbe(1/u) \equiv \bv{t}_k + s\bv{g} - n\bbe(1/u)$,  we have
\begin{align}
\label{p3n_sum}
P_{3,n} &= \mathbb{P}\big(\eta(sG) < \infty, \eta_s = n, \bv{S}(n) \in V_3  \big)  \notag
\\
&=  \sum_{k=-m}^{m-1}  \mathbb{P}\big(\eta(sG) < \infty, \eta_s = n, \bv{S}(n) \in W(\Delta^*[\bv{z}_k))  \big) \notag \\
& = (1+o(1))\Xi(s,n)\!\!\! \sum_{k = -m}^{m-1}\!\! e^{-\frac{1}{2n}\sbv{x}_k \Lambda''(\sbbe(1/u)) \sbv{x}_k^T}
 \notag
 \\
 &\hspace{32 mm} \times E(1/u, \bv{t}_k, W(\Delta^*[\bv{0}))) + o\big(\Xi(s,n)\big),
\end{align}
where the remainder term $o\big(\Xi(s,n)\big)$ appears as the result of summing up the terms $R$ in~\eqref{firstresult2}, as one can easily verify that $\int_{H_0(r_G)} e^{-c_1\|\sbv{w} \|} d\mu(\bv{w}) < \infty.$

Next observe that
\begin{equation*}
E(1/u, \bv{t}_k, W(\Delta^*[\bv{0})))  = \int_{\Delta^*[\sbv{t}_k)} \rho_u(\bv{t})d\mu(\bv{t}),
\end{equation*}
where we put, for  $\bv{t} \in H_0(1/u_G),$
\begin{equation*}
\rho_u(\bv{t}) := \int_{0}^{\infty}e^{-\lag \sbla(\sbbe(1/u)), \sbv{t} -\sbv{t}_k+ y\sbv{\zeta} \rag} q_{\sbbe(1/u)}(\bv{t}-\bv{t}_k + y\bv{\zeta})  p(\bv{t} + y\bv{\zeta}) dy.
\end{equation*}
Note that since $e^{- \lag \sbla(\sbbe(1/u)), \sbv{t}-\sbv{t}_k+ y\sbv{\zeta} \rag} = e^{-\lag \sbla(\sbbe(1/u)),  y\sbv{\zeta} \rag}$ and  $ q_{\sbbe(1/u)}(\bv{t}-\bv{t}_k + y\bv{\zeta})  =  q_{\sbbe(1/u)}( y\bv{\zeta}) $ for $\bv{t} \in H_0(1/u_G)$, one actually has
\begin{equation*}
\rho_u(\bv{t}) = \int_{0}^{\infty}e^{-\lag \sbla(\sbbe(1/u)), y\sbv{\zeta} \rag} q_{\sbbe(1/u)}( y\bv{\zeta})  p(\bv{t} + y\bv{\zeta}) dy.
\end{equation*}

Recalling our notation \eqref{chi}, the sum on the RHS of~\eqref{p3n_sum} can be expressed as
\begin{align*}
\sum_{k = -m}^{m-1} e^{-\frac{1}{2n}(\sbv{t}_k  - \sbv{\chi}) \Lambda''(\sbbe(1/u))(\sbv{t}_k  - \sbv{\chi})^T} \int_{\Delta^*[\sbv{t}_k)} \rho_u(\bv{t})d\mu(\bv{t}).
\end{align*}
Putting $f(\bv{z}) := \exp\{-\frac{1}{2n}\bv{z} \Lambda''(\bbe(1/u))\bv{z}^T \} $,  one can easily verify that
\begin{equation}
\label{fdiff}
\frac{f(\bv{z} + \Delta_1\bv{e})}{f(\bv{z})} = 1+ o(1)
\end{equation}
uniformly in $n \in N_s$, $\Delta_1 \in (0, \Delta]$ and $\| \bv{z}\| \leq cMs^{1/2}$, $c > 0$.

Therefore, letting $\Delta \to 0$ sufficiently slowly,  we can replace the above sum with the integral over the set $\Delta^*_0 [\bv{a}_-)$ with $\Delta^*_0 := 2m\Delta \equiv 2M\ln s$ to obtain
\begin{equation}
\label{p33}
P_{3,n} =  (1+o(1)) \Xi(s,n)\int_{\Delta^*_0[\sbv{a}_-)} e^{-\frac{1}{2n}(\sbv{t}  - \sbv{\chi}) \Lambda''(\sbbe(1/u)) (\sbv{t}  - \sbv{\chi})^T}\rho_u(\bv{t})d\mu(\bv{t}) + o\big(\Xi(s,n)\big).
\end{equation}
Recalling that $\bv{a}_- = -(M\ln s) \bv{e},$ we have from Lemma~\ref{nbconv} (with $\gamma(s) = Ms^{1/2}$) that
\begin{align*}
\exp &\Big\{-\frac{1}{2n}(\bv{t}-\bv{\chi})\Lambda''(\bbe(1/u))(\bv{t}-\bv{\chi})^T \Big\}
\\
&= \exp \Big\{ -\frac{1}{2n}\bv{\chi}\Lambda''(\bbe(1/u))\bv{\chi}^T +\frac{1}{n}\bv{t}\Lambda''(\bbe(1/u))\bv{\chi}^T - \frac{1}{2n}\bv{t}\Lambda''(\bbe(1/u))\bv{t}^T \Big\}
\\
&= (1+o(1))\exp \Big\{  -\frac{1}{2n}\bv{\chi}\Lambda''(\bbe(1/u))\bv{\chi}^T\Big\}
\end{align*}
uniformly in  $\bv{t} \in \Delta_0^*[\bv{a}_-)$ and $n \in N_s$.
Hence  it follows from~\eqref{p33} that
\begin{equation*}
P_{3,n} = (1+o(1))\Xi(s,n)e^{-\frac{1}{2n}\sbv{\chi}\Lambda''(\sbbe(1/u))\sbv{\chi}^T }  \int_{\Delta^*_0[\sbv{a}_-)} \rho_u(\bv{t})d\mu(\bv{t}) + o\big(\Xi(s,n)\big).
\end{equation*}
Note that $\int_{\Delta^*_0[\sbv{a}_-)} \rho_u(\bv{t})d\mu(\bv{t}) = E\big(1/u, \bv{0}, W(\Delta^*[\bv{a}_-))\big)$ and, as $M \to \infty$, one has $ E(1/u, \bv{0}, W(\Delta^*[\bv{a}_-))) \to E(1/u, \bv{0}, \widehat{H}_0(1/u_G))$, so that
\begin{align}
\label{newp3}
 P_{3,n}  = (1+o(1))\Xi(s,n)e^{-\frac{1}{2n}\sbv{\chi}\Lambda''(\sbbe(1/u))\sbv{\chi}^T } E(1/u, \bv{0}, \widehat{H}_0(1/u_G)) +o\big(\Xi(s,n)\big).
\end{align}
Representation \eqref{newp3} holds in the case   $d\ge 3  $ as well. This is shown using the same argument as above, the only difference being that, instead of partitioning the straight line segment with end points $\bv{a}_-$ and $\bv{a}_+$ into small subintervals  $\Delta^*[\bv{z}_k),$ we partition the base of the half-cylinder $V_3$ into small cubes (showing that the ``boundary effects" arising due to the ``imperfection" of such a partition of that ball will be negligible).

Recalling the representation $ \bv{\chi}  = (n - s u_G) \bv{\kappa} + O(s^{-1}\gamma^2(s))$ from Lemma~\ref{nbconv} and setting
\begin{equation}
\label{no4}
a(u) := \bv{\kappa}\Lambda''(\bbe(1/u))\bv{\kappa}^T,
\end{equation}
we see that, for $|n-s u_G|\le \gamma (s),$ one has
\begin{align*}
\exp \Big\{-\frac{1}{2n}\bv{\chi}\Lambda''(\bbe(1/u))\bv{\chi}^T \Big\}
&=\exp \Big\{ -\frac{1}{2n}\big[ a(u)( n - su_G)^2 + O(s^{-1}\gamma^3(s) )\big]   \Big\}
\\
&= \exp \Big\{- a(u)s\frac{(u- u_G)^2}{2u} + O\big( s^{-2} \gamma^3(s)  \big) \Big\}
\\
&= \exp \Big\{- a(u)s\frac{ (u - u_G)^2}{2u}\Big\}(1+o(1))
\\
&= \exp \Big\{- a(u_G)s\frac{(u - u_G)^2}{2u} \Big\}(1+o(1))
 \end{align*}
since $\gamma (s) =Ms^{1/2}, $ $M =o(s^{1/6}),$ and $|u - u_G| \leq Ms^{-1/2}$ for $n \in N_s$ and the function $a(u)$ is continuous.

Recalling~\eqref{doubleinf1},~\eqref{XIXI} and that $n = su$, one has
 \begin{equation*}
 \Xi(s,n) = \frac{e^{-sD_u(\widehat{H}(1/u_G))}}{(2\pi s)^{d/2}u^{d/2}\sigma(\bbe(1/u))}.
  \end{equation*}

We conclude that the first term on the RHS of~\eqref{newp3}, after the substitution $n = su,$ takes (up to the factor $1+o(1)$) the following form:
\begin{multline*}
\pi_s(u) := \frac{1}{(2\pi s)^{d/2}u^{d/2}\sigma(\bbe(1/u))}
\\
\times \exp \Big\{ -sD_u(\widehat{H}(1/u_G)) -  a(u_G)s\frac{(u-u_G)^2}{2u} \Big\}E(1/u, \bv{0}, \widehat{H}_0(1/u_G)),
\end{multline*}
and so in this part of the proof we are aiming at computing the sum
\begin{equation}
\label{no3}
\sum_{n \in N_s} P_{3, n} = (1+o(1))\sum_{n \in N_s}\pi_s(n/s) + \sum_{n \in N_s}o(\Xi(s,n)).
\end{equation}
To replace the first sum on the RHS of~\eqref{no3} by the respective integral w.r.t.~$du$, we note that, for $0 \leq \theta < 1$ and $u \in [u_G - Ms^{-1/2}, u_G + Ms^{-1/2}] =: I_s,$ one has
\begin{equation*}
\frac{\pi_s(u + \theta/s)}{\pi_s(u)} = 1+ o(1).
\end{equation*}
This can be verified by an elementary calculation, using the continuity of $\bbe(1/u)$ and $E(1/u, \bv{0}, \widehat{H}_0(1/u_G))$  in $u$, and also the fact that, by the mean value theorem,
\begin{equation*}
D_{u + \theta/s}(\widehat{H}(1/u_G)) = D_{u}(\widehat{H}(1/u_G)) + D'_{u}(\widehat{H}(1/u_G))\big|_{u+\theta^*/s} \theta/s
\end{equation*}
  for some $\theta^* \in (0, \theta),$ where $D'_{u}(\widehat{H}(1/u_G))\big|_{u+\theta^*/s} = o(1)$ uniformly in $u \in I_s$ since $D'_{u}(\widehat{H}(1/u_G))\big|_{u=u_G} = 0$ (cf.\ the proof of Lemma~\ref{DG}). Therefore, the first sum on the RHS of~\eqref{no3} equals
\begin{equation}
\label{p'3}
\widetilde P_3 := \frac{(1+o(1))\widetilde{E}_{u_G}}{(2\pi)^{d/2}s^{d/2-1}} \int_{I_s}  \exp \Big\{ -sD_u(\widehat{H}(1/u_G)) -  a(u_G)s\frac{(u-u_G)^2}{2u} \Big\}du,
\end{equation}
where we used the fact that
\begin{equation*}
  \widetilde{E}_u := \frac{E(1/u, \bv{0}, \widehat{H}_0(1/u_G))}{u^{d/2}\sigma(\bbe(1/u))} = (1+o(1))\widetilde{E}_{u_G} \quad {\rm for }  \quad u \in I_s.
\end{equation*}

To be able to apply now the Laplace method for evaluating the integral on the RHS of~\eqref{p'3}, we will need the following lemma.

\begin{lemma}
\label{DHconvex}
There exists a $\delta > 0$ such that the function $D_u(\widehat{H}(r_G))$ is convex on the interval $(u_G -\delta, u_G + \delta).$
\end{lemma}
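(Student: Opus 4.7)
The plan is to derive Lemma~\ref{DHconvex} directly from property [$\textbf{D}_4$] together with the convexity of the half-space $\widehat{H}(r_G)$. The key observation is the standard convex-analytic fact that partial minimization preserves convexity: if $f(u,\bv{v})$ is jointly convex and $C$ is a convex set, then $g(u):=\inf_{\sbv{v}\in C} f(u,\bv{v})$ is convex in $u$ on the set where it is finite. In our setting, $f=D_u(\bv{v})=u\Lambda(\bv{v}/u)$ is jointly convex on $\mathbb{R}^+\times \mathbb{R}^d$ by~[$\textbf{D}_4$], and $C=\widehat{H}(r_G)$ is a half-space, hence convex.

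To implement this, I would fix $u_1,u_2$ in a candidate neighborhood of $u_G$ and $a\in(0,1)$, and for any $\varepsilon>0$ pick $\bv{v}_i\in \widehat{H}(r_G)$ with $u_i\Lambda(\bv{v}_i/u_i)\le D_{u_i}(\widehat{H}(r_G))+\varepsilon$, $i=1,2$. Since $\widehat{H}(r_G)$ is convex, the combination $a\bv{v}_1+(1-a)\bv{v}_2$ also lies in $\widehat{H}(r_G)$. Applying the inequality~\eqref{convexineq} from the proof of~[$\textbf{D}_4$] with $p:=au_1/(au_1+(1-a)u_2)$, $\bal_i:=\bv{v}_i/u_i$ gives
\[
(au_1+(1-a)u_2)\Lambda\!\left(\tfrac{a\bv{v}_1+(1-a)\bv{v}_2}{au_1+(1-a)u_2}\right)\le a u_1\Lambda(\bv{v}_1/u_1)+(1-a)u_2\Lambda(\bv{v}_2/u_2).
\]
The left-hand side is an upper bound for $D_{au_1+(1-a)u_2}(\widehat{H}(r_G))$ (by definition of the infimum, using $a\bv{v}_1+(1-a)\bv{v}_2\in\widehat{H}(r_G)$), while the right-hand side is at most $aD_{u_1}(\widehat{H}(r_G))+(1-a)D_{u_2}(\widehat{H}(r_G))+\varepsilon$. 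Letting $\varepsilon\to 0$ establishes the desired convexity inequality for $u\mapsto D_u(\widehat{H}(r_G))$.

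It remains only to check that $D_u(\widehat{H}(r_G))$ is finite on some open interval around $u_G$, which makes the convexity statement nonvacuous. Non-negativity is immediate from $\Lambda\ge 0$, and the upper bound $D_u(\widehat{H}(r_G))\le u\Lambda(\bv{g}/u)$ is finite whenever $\bv{g}/u$ lies in the open set $\Omega_\Lambda$. Since $\bv{g}/u_G=r_G\bv{g}\in\Omega_\Lambda$ by condition~[\textbf{C}$_3(r_G)$] and $\Omega_\Lambda$ is open, such finiteness persists on some interval $(u_G-\delta,u_G+\delta)$, yielding the required~$\delta>0$. I anticipate no serious obstacle here: the lemma is essentially a one-variable reformulation of the joint convexity already proved, and the same argument in fact delivers convexity on the entire connected component of the set on which $D_u(\widehat{H}(r_G))$ is finite, though only a neighborhood of $u_G$ is needed for the subsequent Laplace-method step.
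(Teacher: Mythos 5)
Your proof is correct and rests on the same cornerstone as the paper's, namely property~[$\textbf{D}_4$] together with the convexity of $\widehat{H}(r_G)$; the route, however, is a bit more streamlined. The paper works with the exact minimizer $\bbe(1/u)=\bal[\tfrac1u\widehat{H}(r_G)]$, forms the convex combination $\bbe_0:=\tfrac{au_1}{u_0}\bbe(1/u_1)+\tfrac{(1-a)u_2}{u_0}\bbe(1/u_2)$, and then verifies $\bbe_0\in\tfrac1{u_0}H(r_G)$ by tracking the affine hyperplane on which each $\bbe(1/u_k)$ sits, before invoking the minimality of $\bbe(1/u_0)$ to close the inequality. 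You instead apply the standard infimal-projection principle (partial minimization of a jointly convex function over a convex set is convex) with $\varepsilon$-optimal points in place of exact minimizers. This dispenses entirely with the existence and boundary location of $\bbe(1/u)$ and uses only the convexity of the half-space as a subset of $\mathbb{R}^d$, so your argument is slightly more elementary and more general (it shows convexity of $u\mapsto D_u(B)$ for any convex $B$, globally on the set where it is finite, which is indeed all of $\mathbb{R}^+$ here since $\Lambda\ge 0$). The only point the paper gets ``for free'' from its formulation is that $\bbe(1/u)$ is defined for $u$ near $u_G$, which is what produces its neighborhood $\delta$; your finiteness check via $D_u(\widehat{H}(r_G))\le u\Lambda(\bv{g}/u)$ and openness of $\Omega_\Lambda$ serves the same role and is correct. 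For the purposes of the subsequent Laplace-method step only local convexity near $u_G$ is needed, so both arguments suffice.
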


\begin{proof}
First note that, in view of {\rm [\textbf{C}$_3(r_G)$]}, there is a  $\delta > 0 $ such that  $\bbe(1/u)$ is well-defined for $u \in (u_G - \delta, u_G + \delta).$ That the function $D_u(\widehat{H}(r_G)) = u\Lambda(\widehat{H}(r_G)/u) \equiv u\Lambda(\bbe(1/u))$  is convex in $u$ on that interval  means that, for $u_1, u_2 \in (u_G -\delta, u_G + \delta)$, $a \in (0, 1)$ and $u_0 := au_1 + (1-a)u_2$, one has
\begin{equation}
\label{convexh}
u_0\Lambda(\bbe(1/u_0)) \leq au_1\Lambda(\bbe(1/u_1)) + (1-a)u_2\Lambda(\bbe(1/u_2)).
\end{equation}
Recall that $\bbe(1/u)$ is the MPP of the set $\frac{1}{u}\widehat{H}(r_G)$ and, as $\Lambda$ is convex, that point is located on the boundary~$\frac{1}{u}H(r_G)$ of that set by~\eqref{aldB}. By [{\bf D}$_4$] (setting $\bv{v}_k := u_k \bbe(1/u_k)$ in \eqref{convexineq}), letting
$
\bbe_0 :=  \frac{au_1}{u_0}\bbe(1/u_1) + \frac{(1-a)u_2}{u_0}\bbe(1/u_2),
$
one has
\begin{equation}
\label{59a}
u_0\Lambda ( \bbe_0 ) \leq au_1 \Lambda(\bbe(1/u_1)) + (1-a)u_2\Lambda(\bbe(1/u_2)).
\end{equation}
On the other hand, as $\bbe(1/u) \in \frac{1}{u}H(r_G)$, one also has
\begin{equation*}
\frac{au_1}{u_0}\bbe(1/u_1) \in \frac{a}{u_0}H(r_G) \quad {\rm and} \quad \frac{(1-a)u_2}{u_0}\bbe(1/u_2) \in \frac{1-a}{u_0}H(r_G).
\end{equation*}
Hence we conclude that $\bbe_0 \in \frac{1}{u_0}H(r_G).$ However, $\bbe(1/u_0)$ is the MPP of the ``upper'' half-space $\frac{1}{u_0}\widehat{H}(r_G),$ and therefore $\Lambda(\bbe(1/u_0)) \leq \Lambda(\bbe_0).$ Together with~\eqref{59a} this proves~\eqref{convexh}.
\end{proof}

Now it follows that the function in the exponential in~\eqref{p'3} is concave and continuously differentiable in a neighborhood  of the point $u = u_G$ at which it attains its maximum value equal to $-sD_u(\widehat{H}(1/u_G)) = -sD(G)$ (by Lemma~\ref{DG}). Furthermore, there exist (see\ (28) in \cite{mainpaper})
\begin{equation*}
\sigma^2_D := \frac{d^2}{du^2}D_u(\widehat{H}(1/u_G))\Big|_{u=u_G} > 0 \quad {\rm and }
\quad \frac{d^2}{du^2} \bigg(\frac{(u-u_G)^2}{u} \bigg) \bigg|_{u= u_G} = \frac{2}{u_G}.
\end{equation*}
By the routine use of the Laplace method (see\ e.g.\ Section 2.4 in \cite{erd_asy}), recalling that we let $M = M(s) \to \infty$, we obtain that the integral in~\eqref{p'3} equals
\begin{equation*}
(1+o(1))e^{-sD(G)}\sqrt{\frac{2\pi}{s(\sigma^2_D+a(u_G)/u_G)}}.
\end{equation*}
Therefore, letting $\sigma^*_D := \sqrt{\sigma^2_D +  a(u_G)u_G^{-1}}$, we have
\begin{equation}
\label{no5}
\widetilde P_3 = \frac{(1+o(1))E(1/u_G, \bv{0}, \widehat{H}(1/u_G))}{(2\pi)^{(d-1)/2}u_G^{d/2}\sigma^*_D\sigma(\bal(1/u_G))} \cdot \frac{e^{-sD(G)}}{s^{(d-1)/2}}.
\end{equation}
It remains to compute the sum of the second terms $o\big(\Xi(s,n)\big)$ in~\eqref{no3} over $n \in N_s$. Applying the Laplace method in the same way as when evaluating $\widetilde P_3$, we find that
\begin{equation}
\label{oxi}
\sum_{n \geq 1} \Xi(s,n) = O(\widetilde P_3).
\end{equation}
So the above-mentioned sum of the remainders is $o(\widetilde P_3). $ We conclude  from~\eqref{no3} that
\begin{equation}
\label{no6}
\sum_{n \in N_s}P_{3,n} = (1+o(1)) \widetilde P_3.
\end{equation}

Next we will bound the sum $\sum_{n \notin N_s}P_{3,n}$. For a fixed $\gamma \in \mathscr{G}$ (to be chosen later, after~\eqref{p331}; we will need a function growing faster than $Ms^{1/2}$), let
\begin{align*}
N^*_s  := \{n \in \mathbb{N}: Ms^{1/2} < |n - su_G| \leq \gamma(s) \},
\quad
N^{**}_s  := \{n \in \mathbb{N}:  |n - su_G| > \gamma(s) \},
\end{align*}
and show that the sums of $P_{3,n}$ over $n \in N^*_s$ and $n \in N^{**}_s$ are both $o(\widetilde P_3).$ These sums will have to be bounded in different ways,  the sum over $N^{**}_s$  being easier to handle.

Consider the sum over~$n\in N^{*}_s$. It will again be easier   to first explain the proof in the case $d=2$; it is extended to the general case using the same argument as presented after representation~\eqref{newp3}.  By Corollary~\ref{coro1},  for $n \in N^*_s$ expression \eqref{p3n_sum} becomes
\begin{multline*}
P_{3,n} = (1+o(1))\Xi(s,n)\sum_{k = -m}^{m-1} \Big[\exp \Big\{-\frac{1}{2n}\bv{x}_k \Lambda''(\bbe(1/u)) \bv{x}_k^T + O(\| \bx_k\|^3n^{-2})\Big\}
\\
\times E\big(1/u, \bv{t}_k, W(\Delta^*[\bv{0}))\big)\Big] + o\big(\Xi(s,n)\big),
\end{multline*}
where the remainder term $o\big(\Xi(s,n) \big)$ is the same as the one in~\eqref{p3n_sum}. It will turn out that, for $n \in N^*_s,$ the values of $\bx_k$ will be large enough to ensure the desired result due to the quadratic term in the exponential in the sum.

Recall that $\bx_k =  \bv{t}_k + s\bv{g} - n\bbe(1/u)$. Since $\|s\bv{g} - n\bbe(1/u) \| < c\gamma(s)$ for $n \in N^*_s$ by Lemma~\ref{nbconv} and $\|\bv{t}_k \| \le  M \ln s$,  $k = -m, \ldots, m,$ one has $\|\bx_k \| < c_1\gamma(s)$, $k = -m, \ldots, m$. It is not hard to verify that  relation~\eqref{fdiff} holds true for $\|\bv{z}\| < \gamma(s)$ as well. Therefore, setting
\[
\Upsilon (n,s,\bv{t}):=\frac{1}{2n}(\bv{t}  - \bv{\chi}) \Lambda''(\bbe(1/u)) (\bv{t}  - \bv{\chi})^T
\]
and
following steps similar to the ones used to obtain~\eqref{p33}, one has, for $n \in N^*_s,$
\begin{multline}
\label{p331}
P_{3,n}   = (1+o(1)) \Xi(s,n) \int_{\Delta^*_0[\sbv{a}_-)}
  \!\! \!\!\!
 e^{- \Upsilon (n,s,\sbv{t})  + O(\| \sbv{\chi}\|^3 n^{-2})}
  \rho_u(\bv{t})d\mu(\bv{t}) + o\big(\Xi(s,n)\big).
\end{multline}
Now choose $\gamma (s):= s^{5/8},$ $M:=s^{1/10}$ (thus ensuring that $\gamma (s) \gg  Ms^{1/2}$ and $M=o(s^{1/6})$, as required) and consider the first factor in the integrand. By Lemma~\ref{nbconv}, one has   $\| \bv{\chi}\|^3 n^{-2} = O(\gamma^3(s)s^{-2})= o(1) $ for $n\in N^*_s$. Further, as $\|\bv{t}\|\le M\ln S,$ due to the same lemma, using a computation similar to the one following~\eqref{no4}, we have for $n$ from the same range that
\begin{align*}
\Upsilon (n,s,\bv{t}) & =\frac1{2n} (n-su_G)^2 a(u) + O\big(s^{-2}\gamma^3 (s)+s^{-1}\gamma (s)M\ln s\big)
\\
&\ge \frac{a(u)}{2u} M^2 +o(1) = \frac{a(u_G)}{2u_G}M^2 (1+o(1)) \ge c_0 M^2
\end{align*}
for some $c_0>0,$ as $a(u)/u \to a(u_G)/u_G > 0.$

Now recalling that $\bv{a}_- = -(M\ln s)\bv{e}$, $\Delta_0 = 2M\ln s$ and  $M \to \infty$, we see that the expression on the RHS of~\eqref{p331} does not exceed
\begin{align*}
(1+o(1))\Xi(s,n)&e^{-c_0 M^2} \int_{\Delta^*_0[\sbv{a}_-)} \rho_u(\bv{t})d\mu(\bv{t}) +  o\big(\Xi(s,n)\big)
 \\
&= \big(E(\bbe(1/u), \bv{0}, \widehat{H}_0(1/u_G)) + 1\big) o\big(\Xi(s,n)\big) = o\big(\Xi(s,n)\big)
\end{align*}
as $E(1/u, \bv{0}, \widehat{H}_0(1/u_G)) < \infty.$ Therefore it follows from~\eqref{oxi} that
\begin{align}
\label{2ndsum}
\sum_{n \in N^*_s} P_{3,n} = o\Big(\sum_{n \in N^*_s} \Xi(s,n) \Big) = o\Big(\sum_{n \geq 1} \Xi(s,n) \Big) = o( \widetilde P_3).
\end{align}
This bound is obtained in the case $d\ge 3$ in exactly the same way, using the same change in the argument  as described in the paragraph following~\eqref{newp3}.

It remains to evaluate the term $\sum_{n \in N^{**}_s} P_{3,n}.$ From~\eqref{53a} and Chebyshev's exponential inequality, one has
\begin{equation*}
P_{3,n} \leq \mathbb{P}\big( \bv{S}(n) \in s\widehat{H}(1/u_G) \big) \leq e^{-n\Lambda(\sbbe(1/u))} = e^{-sD_u(\widehat{H}(1/u_G))}.
\end{equation*}
Recall that $D_u(\widehat{H}(r_G))$ is convex in a neighborhood of $u_G$ and attains its minimum at $u_G$, with  $\frac{d}{du}D_u(\widehat{H}(r_G))\big|_{u=u_G} = 0$ and $ \frac{d^2}{du^2} D_u(\widehat{H}(r_G))\big|_{u = u_G} = \sigma^2_D > 0$.  Setting $n_1 := |n - u_Gs|$, for some $\delta > 0$ we see that, for our chosen  $\gamma (s)=s^{5/8}, $  one has, for some $c_k \in (0, \infty),$ $1\le k \le 5,$ the bounds
\begin{align*}
\sum_{n \in N^{**}_s}  & e^{-sD_{n/s}(\widehat{H}(1/u_G))}
 \\
 &\leq 2e^{-sD(G)}\Big(\sum_{\gamma(s) < n_1 \leq \delta s}e^{-c_1n_1^2/s} + \sum_{ n_1 > \delta s} e^{-c_3s\delta^2 - c_2s(n_1-\delta s)} \Big)
\\
&\leq  2e^{-sD(G)} \Big(\frac{c_4s}{\gamma(s)}e^{-c_1\gamma^2(s)/s  } + c_5e^{-c_3s\delta^2}\Big)   =o(\widetilde P_3).
\end{align*}
Together with~\eqref{53a},~\eqref{no5},~\eqref{no6} and~\eqref{2ndsum}, that leads to
\begin{equation*}
P_3 =  A s^{-(d-1)/2}e^{-sD(G)}(1+o(1)),
\end{equation*}
 where
 \begin{equation}
\label{AAA}
 A:= \frac{E(1/u_G, \bv{0}, \widehat{H}(1/u_G))}{(2\pi)^{(d-1)/2}u_G^{d/2}\sigma^*_D\sigma(\bal(1/u_G))}.
\end{equation}
Together with  \eqref{star1} and \eqref{star2}, this completes the proof of Theorem~\ref{finalthm}.
\end{proof}

\section{A Numerical Example}\label{sect_4}

To illustrate our main result, we will present the outcome  of a simulation study where we used an importance sampling algorithm to get Monte Carlo estimates for  $\mathbb{P}(\eta (sG)<\infty)$   for a range of $s$ values in the case of a bivariate RW with a normal jump distribution.

The estimate is based on the change-of-measure representation
\[
\mathbb{P}(\eta(sG)<\infty)
 =\mathbb{E}_{\sbv{\lambda}} e^{-\langle\sbv{\lambda}, \sbv{S}(\eta(sG))\rangle} ,
\]
where $\mathbb{E}_{\sbv{\lambda}}$ is the expectation w.r.t.\ the probability measure $\mathbb{P}_{\sbv{\lambda}},$ under which the $\bv{\xi}_i$'s are i.i.d.\ random vectors with distribution $F_{\sbv{\lambda}},$ and $\bv{\lambda}$ is chosen so that
\begin{equation}
\label{lapr}
\psi (\bv{\lambda})=1,\qquad \mathbb{P}_{\sbv{\lambda}}(\eta(sG)<\infty)=1.
\end{equation}

We took $F$ to be the bivariate normal distribution $N(\bv{\mu},\Sigma)$ with a non-degenerate~$\Sigma$, in which case  clearly $ \psi (\bv{\lambda}) = \exp\{\bv{\mu} \bv{\lambda}^\top
+ \frac12  \bv{\lambda} \Sigma  \bv{\lambda}^\top\},
$
and  the first relation in~\eqref{lapr} is satisfied on an ellipse passing through the origin. Further, one can easily show that here $\Lambda (\bal)=\frac12  (\bal- \bv{\mu}) \Sigma^{-1}  ( \bal- \bv{\mu})^\top$ and, given that condition  [\textbf{C}$_3(r_G)$] is satisfied  (so that, in particular, $D(G)=D(\bv{g})$), one has
\[
D(G) = \frac1{2t(\bv{g})}( t(\bv{g})\bv{g}- \bv{\mu}) \Sigma^{-1}  ( t(\bv{g})\bv{g}- \bv{\mu})^\top,
\]
where $t(\bv{g})$ solves the equation $\frac{d}{dt}(\Lambda (t\bv{g})/t)=0.$

For our numerical example, we chose
\[
\bv{\mu}:= (-0.5, -0.3),
\qquad
\Sigma :=\biggl(
\begin{array}{cc}
1 & 0.4\sqrt{ 0.8}\\
0.4\sqrt{ 0.8} & 0.8
\end{array}
\biggr),
\qquad
\bv{g}:= (1.5, 2).
\]
It is easy to verify that all the conditions [\textbf{C}$_1$], [\textbf{C}$_2$]    and [\textbf{C}$_3(r_G)$] are met in this case. Next we had to  choose a $\bv{\lambda}$   that would satisfy~\eqref{lapr};  we took $\bv{\lambda}^*:=(0.5331315, 0.7108420)$ (in which case $\psi(\bv{\lambda}^*)-1\approx 2.6\times 10^{-8}$). A routine computation yields $D(G)\approx 2.22939.$

We simulated $5\times 10^4$ trajectories of $\bv{S}^{(\sbv{\alpha})}$ with $
\bal:=\bal( \bv{\lambda}^*)=(0.2874500,0.4594125).
$
For each trajectory, we simulated the first $350$ steps (that was always enough to hit~$sG$ with~$s=15$ in our experiment),  testing at each step the condition that the RW $\bv{S}^{(\sbv{\alpha})}$ hits $sG$ for each  $s=7+0.02k$, $k=0,1,\ldots, 400.$ Taking then the sample means of  $ e^{-\langle\sbv{\lambda^*} ,\sbv{S}(\eta(sG))\rangle}$ yielded simultaneous estimates for $\mathbb{P}(\eta(sG)<\infty) $ for all $s$-values from the above grid.
\begin{figure}[ht]
 	\vspace{-2mm}
		\centering
	\includegraphics[scale=0.6]{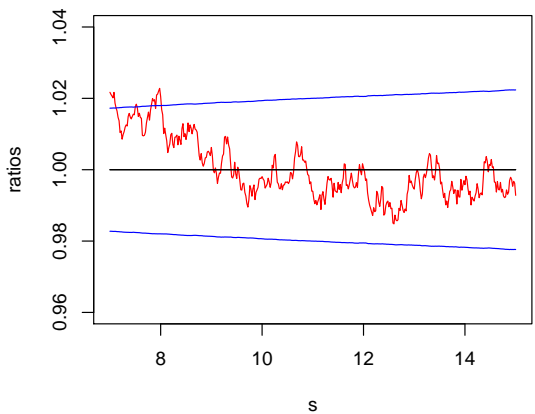}
	\vspace{-5mm}
	\caption{The ratio of the main term in the theoretical  asymptotics~\eqref{lastassertion} for $\mathbb{P}(\eta (sG)<\infty)$  for  the normal RW to the Monte Carlo estimates, together with the ends of the 99\% confidence intervals thereof, for  $s\in [7,15].$}
	\label{ratio_plot}
\end{figure}

Fig.~\ref{ratio_plot} presents the ratio of the main term $As^{-1/2}e^{-sD(G)}$ on the RHS of~\eqref{lastassertion} to the Monte Carlo estimates for $s\in [7,15],$ together with the 99\%\ confidence intervals (obtained as discussed on p.~463 in~\cite{ASM}). As computing the theoretical value of~$A$ is somewhat cumbersome,  for the purposes of the present illustration  we used  the value of~$A$ obtained by fitting the simulation data (which yielded $A\approx 0.3396$), concentrating on verifying the functional form of~\eqref{lastassertion}.  Fitting that formula to the values of the Monte Carlo estimates yielded   $D(G)\approx 2.22954$ (so that the relative error for the second rate function is less than~$10^{-4}$).
The plot shows remarkable stability for the ratio, thus confirming the validity of our main result.


\medskip

{\bf Acknowledgements.}
This research was funded partially   by the Australian Government through the Australian Research Council's Discovery Projects funding scheme (project DP150102758).  Y.~Pan was also supported by the Australian Postgraduate Award and   the School of Mathematics and Statistics, The University of Melbourne. The authors are grateful to the anonymous referee for comments that helped to improve the exposition of the paper.


%
%
%
%

\end{document}